\documentclass[11pt]{amsart}
\usepackage{mathptmx,amsmath,amssymb,enumerate,hyperref,tikz}   
\usetikzlibrary{arrows,positioning}
\setlength{\textwidth}{5.75in}
\calclayout

\renewcommand{\phi}{\ensuremath{\varphi}}
\newcommand{\Q}{\ensuremath{\mathbb{Q}}}
\newcommand{\Z}{\ensuremath{\mathbb{Z}}}
\newcommand{\C}{\ensuremath{\mathbb{C}}}
\newcommand{\A}{\ensuremath{\mathcal{A}}}
\newcommand{\QQ}{\ensuremath{\mathcal{Q}}}
\newcommand{\W}{\ensuremath{\mathcal{W}}}
\newcommand{\PP}{\ensuremath{\mathcal{P}}}
\newcommand{\into}{\ensuremath{\hookrightarrow}} 
\newcommand{\n}{\textbf{n}}
\DeclareMathOperator{\Hom}{Hom}				
\DeclareMathOperator{\End}{End}				
\DeclareMathOperator{\coker}{coker}
\DeclareMathOperator{\Lie}{Lie}

\DeclareMathOperator{\id}{id}

\DeclareMathOperator{\rk}{rk}				
\newcommand{\fiw}{FI$_\W$}
\newcommand{\M}{\mathfrak{M}}
\newcommand{\Cn}{\ensuremath{\mathcal{C}_n}}
\newcommand{\CC}{\ensuremath{\mathcal{C}}}
\newcommand{\An}{\ensuremath{\mathcal{A}_n}}

\DeclareMathOperator{\Ind}{Ind}

\newtheorem{lemma}{Lemma}[section]
\newtheorem{theorem}[lemma]{Theorem}
\newtheorem{proposition}[lemma]{Proposition}
\newtheorem{corollary}[lemma]{Corollary}
\theoremstyle{definition}
\newtheorem{example}[lemma]{Example}
\newtheorem{remark}[lemma]{Remark}

\begin{document}

\title[Representation stability of arrangements from root systems]{Representation stability for the cohomology of arrangements associated to root systems}
\author{Christin Bibby}
\address{Department of Mathematics, University of Western Ontario, London,
Canada}
\email{cbibby2@uwo.ca}

\begin{abstract}
From a root system, one may consider the arrangement of reflecting hyperplanes, as well as its toric and elliptic analogues. 
The corresponding Weyl group acts on the complement of the arrangement and hence on its cohomology. 
We consider a sequence of linear, toric, or elliptic arrangements which arise from a family of root systems of type A, B, C, or D, and
we show that the rational cohomology stabilizes as a sequence of Weyl group representations.
Our techniques combine a Leray spectral sequence argument similar to that of Church in the type A case along with FI$_W$-module theory which Wilson developed and used in the linear case.
A key to the proof relies on a combinatorial description, using labelled partitions, of the poset of connected components of intersections of subvarieties in the arrangement.
\end{abstract}

\maketitle

\section{Introduction}\label{s:intro}

In this paper, we consider arrangements of codimension-one subvarieties in a
complex vector space, torus, or abelian variety, determined by a root system of
type A, B, C, or D. The subvarieties in such an arrangement are determined by
realizing the root system as a set of characters on a torus. 
In each of these cases, the complement of the union of subvarieties comes with a
natural action of the corresponding Weyl group. 
This action makes the rational cohomology into a representation over the Weyl
group, which is the object we study. 

These arrangements arising from root systems also have interesting combinatorics. In the type A case, taking all intersections of subvarieties in the arrangement forms a lattice which is isomorphic to the partition lattice. In the other linear cases, Barcelo and Ihrig \cite{barceloihrig} give a combinatorial description of the intersection lattice. However, in the toric and elliptic cases, intersections of hyperplanes need not be connected and form a partially ordered set which is not necessarily a lattice. 
 In these cases, we consider the poset consisting of connected components of intersections, and in Theorem \ref{thm:components} we give a combinatorial description akin to that of Barcelo and Ihrig.
Understanding the Weyl group action on the connected components of intersections is then equivalent to understanding its action on certain types of partitions, called labelled partitions. 

We have already said that we are interested in the rational cohomology of the complement as a  representation. But more specifically, we consider the sequence of representations arising from each family of root systems. 
We show in Theorem \ref{thm:repstable} that this sequence of representations stabilizes in the sense of Church-Farb \cite{churchfarb}. That is, for $n$ large enough, if we decompose the representations into irreducibles, the multiplicity of each irreducible representation does not depend on $n$. 
As a special case, by taking the trivial representation, the orbit space enjoys homological stability (Corollary \ref{cor:homstab}).

In the case of the symmetric group, the complement is an ordered configuration space. Church \cite{church} showed representation stability of the rational cohomology of ordered configuration spaces using a Leray spectral sequence and the partition lattice. We generalize his method of using this spectral sequence for other types of arrangements by combining it with our combinatorics and with \fiw-module theory developed by Wilson \cite{wilson1,wilson2}. Wilson \cite{wilson1} also showed representation stability for each linear case.

We also give a slight improvement on Church's stable range for type A elliptic arrangements in Proposition \ref{prop:aelliptic}. 
Recently, Hersh and Reiner \cite{hershreiner} showed a better improvement for the type A linear case, and we wonder if their result or methods may also be applied to these other arrangements.  

\section{Arrangements}\label{s:arrangements}

\subsection{Linear, toric, and elliptic arrangements}\label{sec:arrbasics}

The three types of arrangements which we consider in this paper are as follows.
A \textit{linear arrangement} is a set of hyperplanes in a complex vector space,
A \textit{toric arrangement} is a set of codimension-one subtori (possibly
translated) in a complex torus. An \textit{abelian arrangement} is a set of
codimension-one abelian subvarieties (possibly translated) in a complex abelian variety.
In the case of an abelian arrangement, all of our abelian varieties will be products of an elliptic curve and we call it an \textit{elliptic arrangement}.
We denote the complement of $\A$ in $V$ by $M(\A)=V\setminus \cup_{H\in \A} H$. 

A \textit{layer} of an arrangement $\A$ is a connected component of an intersection $\bigcap_{H\in S} H$ for some subset $S\subseteq \A$. Note that the intersections themselves need not be connected. We say that the arrangement is \textit{unimodular} if every intersection is connected. 
The set of layers forms a ranked poset, ordered by reverse inclusion, with rank given by the complex codimension.
Note that linear arrangements are always unimodular, but this is not true in
general for toric or elliptic arrangements.

Locally, all of these arrangements look like classical hyperplane arrangements
(the linear case). We make this explicit here, as the notion of localization of
an arrangement is used in the proof of the key Lemma \ref{lem:E2}.
Let $F$ be a layer of a linear, toric, or elliptic arrangement $\A$.
For a point $p\in F$ not contained in any smaller layers of $\A$, define an arrangement 
$\A_F$ in the tangent space $T_pV$ consisting of hyperplanes $H_F:=T_pH$ for all
$H\supseteq F$. If $V$ has complex dimension $n$, then $\A_F$ is a central
hyperplane arrangement in $T_pV\cong\C^n$. This arrangement is referred to as
the \textit{localization} of $\A$ at $F$.
This arrangement, and its complement $M(\A_F)$, is independent of the choice of
p. That is, we may canonically identify the localizations at two different
generic points via translation.
The poset of layers of the arrangement $\A_F$
corresponds to taking the subposet of layers of $\A$ which contain $F$.

A natural way in which arrangements may arise is from a set of 
characters on a complex torus $T$, say $\Psi\subseteq\Hom(T,\C^\times)$. Here,
for each $\chi\in\Psi$, we take the set of connected components of
$\ker\chi\subseteq T$. This collection of subvarieties
defines a toric arrangement $\A(\C^\times,\Psi)$ in
$T$, in view of DeConcini and Procesi \cite{dcp}.
Noting that the Lie algebra $\Lie(T)$ is a complex vector space, we may take the
kernel of each $d\chi$ and get a linear arrangement $\A(\C,\Psi)$ in $\Lie(T)$.
Moreover, for a complex elliptic curve $E$, there is an embedding
$\Hom(\C^\times,\C^\times)\into \Hom(E,E)$ which sends the identity map on
$\C^\times$ to the identity map on $E$. This then extends to an embedding 
$$\iota:\Hom((\C^\times)^n,\C^\times)\into \Hom(E^n,E)$$
so that given $\Psi\subseteq\Hom(T,\C^\times)$, the collection of connected
components of $\ker\iota\chi$ in $E^n$ for $\chi\in \Psi$ gives an 
elliptic arrangement $\A(E,\Psi)$.

\subsection{Arrangements from root systems}

By taking the perspective of arrangements arising from characters on a torus, we
will now consider the case in which the set of characters is the set of positive roots in
a root system of type A, B, C, or D. 
Here, if $T$ is an $n$-dimensional torus, we will denote the root system by
$\Phi_n\subseteq\Hom(T,\C^\times)$. 
Letting $X$ be $\C$, $\C^\times$, or a complex elliptic curve, the collection of
connected components of the kernel of $d\chi$, $\chi$, or $\iota\chi$,
respectively, for $\chi\in\Phi_n^+$ gives an arrangement
which we denote by $\A(X,\Phi_n^+)$. 

Using the standard basis $v_1,\dots,v_n$ for the integer lattice $\Hom(T,\C^\times)$, 
the type C$_n$ root system consists of:
\[ \Phi_n = \{\pm(v_i\pm v_j)\ |\ 1\leq i<j\leq n\} \cup \{\pm 2v_i\ |\ 1\leq
i\leq n\} \]
and hence the positive roots are:
\[\Phi_n^+ = \{v_i\pm v_j \ |\ 1\leq i<j\leq n\} \cup \{2v_i\ |\ 1\leq i\leq n\} \]
For $\chi\in\Phi_n^+$, the kernel in $X^n$ is not necessarily connected. 
It is if $\chi=v_i\pm v_j$, giving the subvarieties:
\[H_{ij} := \{(x_1,\dots,x_n)\in X^n\ |\ x_i=x_j\}\] 
and
\[H'_{ij} := \{(x_1,\dots,x_n)\in X^n\ |\ x_i=x_j^{-1}\}\]
(writing the group operation on $X$ multiplicatively).
However, if $\chi=2v_i$, the connected components depend on $X$. 
Let $X[2]$ denote the two-torsion points of $X$. For $X=\C$, this consists only of
the origin, but $\C^\times$ has two two-torsion points and a complex elliptic curve has
four. The connected components of the kernel of $d\chi$, $\chi$, and $\iota\chi$ are then
indexed by $X[2]$:
\[H_i^z := \{(x_1,\dots,x_n)\in X^n\ |\ x_i=z\}\text{ for } z\in X[2]\]
In summary, the type C$_n$ arrangement in $X^n$, denoted by $\A(X,\Phi_n^+)$,
 is defined as the collection of
the above subvarieties: $H_{ij}$ (for $1\leq i<j\leq n$), $H'_{ij}$ (for $1\leq
i<j\leq n$), and $H_i^z$ (for $1\leq i\leq n$, $z\in X[2]$).

The type B$_n$ root system consists of:
\[\Phi_n = \{\pm(v_i\pm v_j)\ |\ 1\leq i<j\leq n\}\cup\{\pm v_i\ |\ 1\leq i\leq
n\}\]
Now, the kernel of $v_i$ (or similarly for $dv_i$ or $\iota v_i$) is the
identity component of the kernel of $2v_i$. 
Hence the type B$_n$ arrangement in $X^n$ consists of $H_{ij}$ ($1\leq i<j\leq
n$), $H'_{ij}$ ($1\leq i<j\leq n$), and $H_i^e$ ($1\leq i\leq n$, with $e$ the
identity of $X$). Note that in the linear case, the type B and C arrangements
are equal. 

The type D$_n$ root system consists of:
\[\Phi_n = \{\pm(v_i\pm v_j)\ |\ 1\leq i<j\leq n\}\]
hence the type D$_n$ arrangement consists of the subvarieties $H_{ij}$ and
 $H'_{ij}$ for $1\leq i<j\leq n$.

The type A$_{n-1}$ root system consists of:
\[\Phi_n = \{\pm(v_i-v_j)\ |\ 1\leq i<j\leq n\}\]
hence the type A$_{n-1}$ arrangement consists of the subvarieties $H_{ij}$ for
$1\leq i<j\leq n$.

Note that toric and elliptic arrangements of types B, C, and D, are not
unimodular, as $H_{ij}\cap H'_{ij}$ has connected components indexed by $X[2]$.
More specifically, $H_{ij}\cap H'_{ij}$ is the collection of points whose $i$'th
and $j$'th coordinates are both equal to each other and their inverse, hence
equal to a two-torsion point. 

\begin{example}
The best pictures we have for these arrangements are in $n=2$ with the real version of linear and toric arrangements. We draw here the pictures of the toric case, in $S^1\times S^1$; the subtori of the arrangement are the thickened lines. 
We warn the reader that, while the combinatorics of the real versus complex
pictures agree, the topology is very different. For example, the complement of
the complex arrangement is connected. 
\bigskip

\begin{center}
\begin{tikzpicture}
\node at (.5,-.4) {A$_1$};
\draw[-] (0,0)--(1,0)--(1,1)--(0,1)--(0,0);
\draw[ultra thick,-] (0,0)--(1,1);
\end{tikzpicture}
\hspace{1cm}
\begin{tikzpicture}
\node at (.5,-.4) {B$_2$};
\draw[ultra thick,-] (0,0)--(1,0)--(1,1)--(0,1)--(0,0);
\draw[ultra thick,-] (0,0)--(1,1);
\draw[ultra thick,-] (1,0)--(0,1);
\end{tikzpicture}
\hspace{1cm}
\begin{tikzpicture}
\node at (.5,-.4) {C$_2$};
\draw[ultra thick,-] (0,0)--(1,0)--(1,1)--(0,1)--(0,0);
\draw[ultra thick,-] (0,0)--(1,1);
\draw[ultra thick,-] (1,0)--(0,1);
\draw[ultra thick,-] (0.5,0)--(0.5,1);
\draw[ultra thick,-] (0,0.5)--(1,0.5);
\end{tikzpicture}
\hspace{1cm}
\begin{tikzpicture}
\node at (.5,-.4) {D$_2$};
\draw[-] (0,0)--(1,0)--(1,1)--(0,1)--(0,0);
\draw[ultra thick,-] (0,0)--(1,1);
\draw[ultra thick,-] (1,0)--(0,1);
\end{tikzpicture}
\end{center}
\end{example}

\subsection{Weyl group action}

Let $X$ be one of $\C$, $\C^\times$, or a complex elliptic curve. Considering
our root system as a subset $\Phi\subseteq\Hom(X^n,X)$, the action of the
corresponding Weyl group $\W_n$ on $\Phi$ gives rise to a natural action of 
$\W_n$ on both the poset of layers and on the complement of the arrangement.
If $H_\chi$ is the kernel of $\chi\in\Phi$ and $w\in\W_n$, then $w\cdot H_\chi =
H_{w\cdot \chi}$. We will describe this explicitly in type C.

Consider the hyperoctahedral group $W_n=(\Z/2)\wr S_n = (\Z/2)^n\rtimes S_n$,
the Weyl group in types B$_n$ and C$_n$.
The group $W_n$ acts on $X^n$ via a combination of permuting the coordinates and
inverting some. More specifically, given 
$w=(\sigma,(\epsilon_1,\dots,\epsilon_n))\in S_n\ltimes (\Z/2)^n$ and 
$x=(x_1,\dots,x_n)\in X^n$, $w\cdot x$ has $\epsilon_i(x_i)$ as its $\sigma(i)$-th
coordinate.
Here, we are considering $\Z/2=\{\pm 1\}$ so that $\epsilon_i(x_i)=x_i$ if
$\epsilon_i=1$, and $\epsilon_i(x_i)=x_i^{-1}$ if $\epsilon_i=-1$.
This gives us the following action on our set of subvarieties:
\begin{itemize}
\item $w\cdot H_{ij} = H_{\sigma(i)\sigma(j)}$ if $\epsilon_i\epsilon_j=1$, 
\item $w\cdot H_{ij} = H'_{\sigma(i)\sigma(j)}$ if $\epsilon_i\epsilon_j=-1$,
\item $w\cdot H'_{ij} = H'_{\sigma(i)\sigma(j)}$ if $\epsilon_i\epsilon_j=1$, 
\item $w\cdot H'_{ij} = H_{\sigma(i)\sigma(j)}$ if $\epsilon_i\epsilon_j=-1$,
\item $w\cdot H_i^z = H_{\sigma(i)}^z$.
\end{itemize}
For this last subvariety, we note that since the two-torsion points are fixed by
the action of $W_n$ on $X$, this gives us the action on the connected components
of the kernel of the character $2v_i$, $\cup_{z\in X[2]} H_i^z$.
Denoting the kernel of $\chi\in \Phi$ by $H_\chi$, we also have the action on an
intersection given by $w\cdot\cap_{\chi\in S} H_\chi = \cap_{\chi \in S} w\cdot
H_\chi = \cap_{\chi\in S} H_{w\cdot \chi}$.
For a connected component of an intersection (ie, a layer), we see that if the
component has the point $z\in X[2]$ as the $i$-th coordinate, then the action
will send it to a layer whose $\sigma(i)$-th coordinate is $z$. This gives an
action of $W_n$ on the poset of layers.

Since the type B and D arrangements are subarrangements of the type C
arrangement, this also describes the action in these cases. For type A, we note
that we do not have an action of $\W_n$, but this description restricts to an
action of $S_n$. 

\begin{remark}\label{rmk:typeD}
While we naturally have an action of the corresponding Weyl group on our
arrangements, we emphasize that in the type D case, we actually have an action of the
hyperoctahedral group $W_n$. By
considering this action, rather than the type D Weyl group, on the type D$_n$
arrangements, we can get even
stronger results. By \cite[Prop.~3.22]{wilson2}, our stability result in Theorem
\ref{thm:repstable} stated for $W_n$ will imply stability for the type D Weyl
group, as stated in Corollary \ref{cor:restrict}.
Moreover, by working with a single group, $W_n$, to get our results in the 
three new cases (B, C, and D), this simplifies the exposition as well.
Thus, except for our Corollaries \ref{cor:restrict}, \ref{cor:polychar}, and \ref{cor:homstab},
 we will work exclusively with $W_n$
for the type B, C, and D arrangements, and with $S_n$ for the type A
arrangement.
\end{remark}

\section{Combinatorial description of layers}\label{s:components}

The goal of this section is provide a combinatorial description of layers, which
allows us to better understand and handle the Weyl group action on layers. It
will also help us to break down representations which appear in Section \ref{s:stability} into simpler building blocks, which we can then use to show stability. 
We start by describing the combinatorial objects needed and then show their relationship to layers in Theorem \ref{thm:components}. 

\subsection{Labelled partitions}

Let $S$ and $L$ be sets. We say that a \textit{partition of $S$ labelled by $L$}, or an $L$-labelled partition of $S$, is a partition $\Sigma$ of $S$, together with a subset $T\subseteq\Sigma$ and injection $f:T\to L$. 
We say that the parts in $T$ are the \textit{labelled parts} of $\Sigma$, while $\Sigma\setminus T$ consists of the \textit{unlabelled parts}. For $p\in T$ corresponding to $z\in L$, we say that $p$ is \textit{labelled by} $z$ and use the notation $p=\Sigma_z$. 
We use the convention that if $z$ is not in the image of $f$, then $\Sigma_z=\emptyset$. 
One may similarly define a partition of a number $k$ labelled by $L$. 

Given one of our root systems, we will define a particular set of labelled partitions, which in Theorem \ref{thm:components} will be shown to describe the layers of the corresponding arrangement.
First, we introduce some notation. 
Again let $X$ be $\C$, $\C^\times$, or a complex elliptic curve with 2-torsion points $X[2]$.
Let $[n]=\{1,2,\dots,n\}$ and $\n=\{1,\bar{1},2,\bar{2},\dots,n,\bar{n}\}$. For $S\subseteq \n$, let $\bar{S} = \{\bar{x}\ |\ x\in S\}$, taking $\bar{\bar{k}}=k$. 
We say that a set $S\subseteq\n$ is \textit{bar-invariant} (or self-barred as in
\cite{barceloihrig}) if $\bar{S}=S$.

Let $\PP_n(X)$ be the set of partitions $\Sigma$ of $\n$ labelled by $X[2]$ such that 
\begin{enumerate}[(i)]
\item for every $S\in \Sigma$, $\bar{S}\in \Sigma$, and 
\item $S=\bar{S}$ if and only if $S$ is labelled.
\end{enumerate}
Each type of root system $\Phi_n$ will correspond to a subset
$\CC(X,\Phi_n)\subseteq\PP_n(X)$ as follows:
\begin{itemize}
\item If $\Phi_n$ is type C, let $\CC(X,\Phi_n) = \PP_n(X)$.
\item If $\Phi_n$ is type B, take the subset of all $\Sigma\in\PP_n(X)$ such
that if $|\Sigma_z|=2$ then $z=e$ (the identity of $X$).
\item If $\Phi_n$ is type D, take the subset of all $\Sigma\in\PP_n(X)$ such
that $|\Sigma_z|\neq2$ for any $z\in X[2]$.
\item If $\Phi_n$ is type A, take the subset of all
$\Sigma=\Sigma_+\cup\overline{\Sigma_+}$ where $\Sigma_+\vdash[n]$.
\end{itemize}

In each type, $\CC(X,\Phi_n)$ is a partially ordered set, with $\Sigma<\Sigma'$ if $\Sigma$ is a refinement of $\Sigma'$ such that $\Sigma_z\subseteq\Sigma'_z$ for each $z\in X[2]$.
That is, we order it by refinements which respect the labelling. 
Moreover, $\CC(X,\Phi_n)$ is a ranked poset with $\rk(\Sigma) = n-\frac{\ell}{2}$, where $\ell$ is the number of unlabelled parts of $\Sigma$. 

\begin{example}\label{ex:partposetTC}
Below is the Hasse diagram of $\CC(\C^\times,\Phi_2)$ when $\Phi_2$ is a
type C root system. 
Note that the subscripts on some blocks in the partitions denote the
labelling; here, our two-torsion points are $X[2]=\{\pm 1\}$.

\begin{center}
{\scriptsize
\begin{tikzpicture}
\node (0) at (0,0) {$\{\{1\},\{\bar{1}\},\{2\},\{\bar{2}\}\}$}; 
\node[above left= 1cm and -1cm of 0] (4) {$\{\{1,2\},\{\bar{1},\bar{2}\}\}$}; 
\node[right=.3mm of 4] (5)  {$\{\{1,\bar{2}\},\{\bar{1},2\}\}$};
\node[right=.3mm of 5] (1)  {$\{\{2,\bar{2}\}_{\text{-1}},\{1\},\{\bar{1}\}\}$}; 
\node[right=.3mm of 1] (6)   {$\{\{1,\bar{1}\}_{\text{-1}},\{2\},\{\bar{2}\}\}$}; 
\node[left=.3mm of 4] (3)  {$\{\{2,\bar{2}\}_1,\{ 1\},\{\bar{1}\}\}$};
\node[left=.3mm of 3]  (2)  {$\{\{1,\bar{1}\}_1,\{ 2\},\{\bar{2}\}\}$}; 
\node [above left= 1cm and 4mm of 5] (A)
{$\{\{1,\bar{1}\}_1,\{2,\bar{2}\}_{\text{-1}}\}$}; 
\node [above right= 1cm and 4mm of 4] (D)
{$\{\{2,\bar{2}\}_1,\{1,\bar{1}\}_{\text{-1}}\}$};
\node[right=of D] (C)  {$\{\{1,\bar{1},2,\bar{2}\}_{\text{-1}}\}$};
\node[left=of A] (B)  {$\{\{1,\bar{1},2,\bar{2}\}_1\}$};
\draw[-] (A)--(1)--(0)--(2)--(A);
\draw[-] (2)--(B)--(3)--(0)--(4)--(B)--(5);
\draw[-] (1)--(C)--(4)--(C)--(5)--(0)--(6)--(C);
\draw[-] (3)--(D)--(6);
\end{tikzpicture}}
\end{center}
\end{example}

\begin{example}\label{ex:partposetEB}
Below is the Hasse diagram of $\CC(E,\Phi_2)$ where $\Phi_2$ is a type B
root system and $E$ is a complex elliptic curve. Denote the two-torsion points
of $E$ by $z_1=e$ (the identity), $z_2$, $z_3$, and $z_4$.
\begin{center}
{\scriptsize
\begin{tikzpicture}[scale=1.5]
\node (0) at (0,0) {$\{\{1\},\{\bar{1}\},\{2\},\{\bar{2}\}\}$};
\node (4) at (1,1) {$\{\{1,2\},\{\bar{1},\bar{2}\}\}$};
\node (5) at (3,1)  {$\{\{1,\bar{2}\},\{\bar{1},2\}\}$};
\node (3) at (-1,1) {$\{\{2,\bar{2}\}_{e},\{ 1\},\{\bar{1}\}\}$};
\node (2) at (-3,1) {$\{\{1,\bar{1}\}_{e},\{ 2\},\{\bar{2}\}\}$};
\node (C) at (2,2) {$\{\{1,\bar{1},2,\bar{2}\}_{z_3}\}$};
\node (B) at (.5,2) {$\{\{1,\bar{1},2,\bar{2}\}_{z_2}\}$};
\node (A) at (-1,2)  {$\{\{1,\bar{1},2,\bar{2}\}_e\}$};
\node (D) at (3.5,2) {$\{\{1,\bar{1},2,\bar{2}\}_{z_4}\}$};
\draw[-] (0)--(2)--(A)--(3)--(0)--(4)--(A)--(5);
\draw[-] (C)--(4)--(C)--(5)--(0);
\draw[-] (D)--(4)--(D)--(5)--(0);
\draw[-] (B)--(4)--(B)--(5)--(0);
\end{tikzpicture}}
\end{center}
\end{example}

\subsection{Weyl group action}
In type A, the set $\CC(X,\Phi_n)$ is really just the partition lattice, which
has an action of $S_n$. In the other types, we have an action of the
hyperoctahedral group $W_n=(\Z/2)\wr S_n$. 
This action is induced by its action on $\n$, but we will describe it more explicitly. 
Let $w=(\sigma,\epsilon)\in W_n$ with $\sigma\in S_n$ and $\epsilon\in(\Z/2)^n$. 
Then for $k\in [n]$, we have $w\cdot k = \overline{\sigma(k)}$ if
$\epsilon_k=-1$ and $w\cdot k= \sigma(k)$ if $\epsilon_k=1$. This extends to
$\n$ by $w\cdot \bar{k} =\overline{w\cdot k}$. 
Then for $S\subseteq \n$, we have $w\cdot S = \{w\cdot x\ |\ x\in S\}$, and for
a partition $\Sigma\vdash \n$, we have $w\cdot \Sigma= \{w\cdot S\ |\ S\in \Sigma\}$, 
If $\Sigma$ is a labelled partition, then $w\cdot \Sigma$ is labelled so that
$(w\cdot\Sigma)_z=w\cdot(\Sigma_z)$ for each label $z$.

Given a partition $\Sigma\in \PP_n(X)$,  we can define a partition $\widehat{\Sigma}$ of $n$ labelled by $X[2]$, where $\widehat{\Sigma}_z=\frac{|\Sigma_z|}{2}$ and the unlabelled parts are given by $|S|$ for each pair of unlabelled parts $S,\bar{S}\in \Sigma$. 
For example, if $\Sigma$ is the labelled partition 
\[ \{\{1,\bar{1}\}_e,\{2,\bar{4}\},\{\bar{2},4\},\{3\},\{\bar{3}\}\} \] of
\textbf{4}, then we have $\widehat{\Sigma} = (1_e,2,1)$, a labelled partition of
4.

\begin{lemma}\label{lem:orbits}
Let $\PP_n(X)$ be the set of partitions of $\n$ labelled by $X[2]$, and let
$\QQ_n(X)$ be the set of partitions of $n$ labelled by $X[2]$. 
The nonempty fibers of the function $\PP_n(X)\to \QQ_n(X)$ defined by 
$\Sigma\mapsto\widehat{\Sigma}$ are $W_n$-orbits.
\end{lemma}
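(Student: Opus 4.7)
The plan is to establish the two inclusions separately: that $\widehat{(\cdot)}$ is constant on $W_n$-orbits, and that any two partitions in $\PP_n(X)$ with the same image lie in a single orbit.

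The first direction is essentially formal. An element $w=(\sigma,\epsilon)\in W_n$ acts on $\n$ by a bar-equivariant bijection, so it preserves the cardinality of every subset of $\n$ and carries bar-pairs to bar-pairs. Moreover, $W_n$ fixes every two-torsion point of $X$ (a fact already recorded in Section 2), so the induced action on $\PP_n(X)$ preserves labels. Consequently $|(w\cdot\Sigma)_z|=|\Sigma_z|$ for each $z\in X[2]$, and the sizes of the unlabelled bar-pairs are matched, giving $\widehat{w\cdot\Sigma}=\widehat{\Sigma}$.

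For the reverse direction, I would identify $W_n$, in its action on $\n$, with the group of all bar-equivariant bijections of $\n$: any signed permutation commutes with the bar involution, and conversely a bar-equivariant bijection recovers its data $(\sigma,\epsilon)$ by recording whether each element of $[n]$ is sent to $[n]$ or to $\overline{[n]}$. Given $\Sigma,\Sigma'\in\PP_n(X)$ with $\widehat{\Sigma}=\widehat{\Sigma'}$, it then suffices to construct a bar-equivariant bijection $w:\n\to\n$ carrying each labelled part $\Sigma_z$ to $\Sigma'_z$ and matching the unlabelled parts. I would build $w$ block by block. On a labelled part $\Sigma_z$, both $\Sigma_z$ and $\Sigma'_z$ are bar-invariant subsets of size $2\widehat{\Sigma}_z$; decompose each into its $\widehat{\Sigma}_z$ constituent bar-pairs $\{a,\bar a\}$, choose any bijection between the two sets of bar-pairs, and choose an orientation for each matched pair. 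On the unlabelled parts, the equality $\widehat{\Sigma}=\widehat{\Sigma'}$ yields a size-preserving bijection between the bar-pairs $\{S,\bar S\}$ in $\Sigma$ and the bar-pairs $\{T,\bar T\}$ in $\Sigma'$; for each matched pair, pick any bijection $S\to T$ and extend to $\bar S\to\bar T$ by bar. Gluing these local pieces produces the required $w\in W_n$ with $w\cdot\Sigma=\Sigma'$.

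The only point requiring attention is bar-equivariance: all choices must be made at the level of bar-pairs (or bar-invariant labelled blocks) and then extended by the bar involution, rather than prescribed independently on $[n]$ and $\overline{[n]}$. This is bookkeeping rather than a genuine obstacle, and the construction above makes the required equivariance manifest by design.
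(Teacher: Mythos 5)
Your proposal is correct and follows essentially the same strategy as the paper: show the map $\Sigma\mapsto\widehat\Sigma$ is $W_n$-invariant because the action preserves block sizes and labels, then given $\widehat\Sigma=\widehat{\Sigma'}$ build $w$ block by block by matching labelled parts of equal size and matching unlabelled bar-pairs of equal size, reading off the signs from whether barred elements go to unbarred or vice versa. The only cosmetic difference is that you package this by first identifying $W_n$ with the group of bar-equivariant bijections of $\n$, whereas the paper writes each block's contribution directly as a signed permutation $(\sigma_i,\epsilon_i)$ and takes their product; these are the same construction.
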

\begin{proof}
The $W_n$--action preserves the size of the parts in the partition, which means
that for each $w\in W_n$ and $\Sigma\in\PP_n(X)$, we have
$\widehat{w\Sigma}=\widehat{\Sigma}$.
Moreover, if we have $\widehat{\Sigma}=\widehat{\Sigma'}$, then we could find
some $w\in W_n$ with $w\Sigma=\Sigma'$, constructed as follows. 

Suppose that
$\widehat{\Sigma}=(\lambda_{z_1},\dots,\lambda_{z_m},\lambda_1,\dots,\lambda_\ell)$,
where $X[2]=\{z_1,\dots,z_m\}$. 
So we have $|\Sigma_{z_i}|=|\Sigma'_{z_i}|$ for each $i$; let $\sigma_i$ be a
permutation in $S_n$ which gives a one-to-one correspondence between these
two parts, and let $w_i=(\sigma_i,1)$.
Now, since $\lambda_1,\dots,\lambda_\ell$ denotes the sizes of (pairs of)
unlabelled blocks, we can index the unlabelled parts of $\Sigma$ and $\Sigma'$
as $S_1,\bar{S}_1,\dots,S_\ell,\bar{S}_\ell$ and
$S'_1,\bar{S'}_1,\dots,S'_\ell,\bar{S'}_\ell$ so that we have one-to-one
correspondences between $S_i$ and $S'_i$. Ignoring the bars, this one-to-one
correspondence determines a permutation $\sigma_i\in S_n$. 
To get $w_i=(\sigma_i,\epsilon_i)$, we let the $j$-th coordinate of $\epsilon_i$
be $-1$ if the one-to-one correspondence either sends $j$ to $\bar{k}$ or
$\bar{j}$ to $k$ for some $k$; otherwise the $j$-th coordinate of $\epsilon_i$
will be 1.
With this $w=w_{z_1}\cdots w_{z_m}w_1\cdots w_\ell$, we have $\Sigma'=w\Sigma$.
\end{proof}

This lemma means that for each $\lambda\in\QQ_n(X)$, the set
$\{\Sigma\in \CC(X,\Phi_n)\ |\ \widehat{\Sigma}=\lambda\}$ is either a 
$W_n$-orbit or is empty. 
Example \ref{ex:decomp} depicts the orbits of $\CC(\C^\times,\Phi_2)$ when
$\Phi_2$ is a type B root system.

\subsection{Layers as labelled partitions}\label{sec:components}

The goal of this section is to prove that these labelled partitions give a combinatorial description of layers of the arrangement. 
This description will help us to get a handle on certain representations in Lemma \ref{lem:E2}.
But before proving our claim in Theorem \ref{thm:components}, we consider some
examples to demonstrate it.

\begin{example}\label{ex:compposetTC}
Below is the Hasse diagram for the poset of layers in the case that
$\Phi_2$ is a type C root system and $X=\C^\times$. The bijection with the poset
in Example \ref{ex:partposetTC} should be visible here. 
\begin{center}
\begin{tikzpicture}[scale=0.45]
\node (0) at (0,0) {$(\C^\times)^2$}; 
\node[above left = 7mm and -1mm of 0] (4)  {$H_{12}$};
\node[above right= 7mm and -1mm of 0] (5)   {$H'_{12}$};
\node[right= 5mm of 5] (1)  {$H_2^{-1}$};
\node[right=5mm of 1] (6)   {$H_1^{-1}$};
\node[left=5mm of 4] (3)  {$H_2^1$};
\node[left=5mm of 3]  (2)  {$H_1^1$};
\node [above left= 7mm and 1cm of 5] (A)   {$(1,-1)$};
\node [above right= 7mm and 1cm of 4] (D)  {$(-1,1)$};
\node[right=of D] (C) {$(-1,-1)$}; 
\node[left=of A] (B)  {$(1,1)$};
\draw[-] (A)--(1)--(0)--(2)--(A);
\draw[-] (2)--(B)--(3)--(0)--(4)--(B)--(5);
\draw[-] (1)--(C)--(4)--(C)--(5)--(0)--(6)--(C);
\draw[-] (3)--(D)--(6);
\end{tikzpicture}
\end{center}
\end{example}

\begin{example}\label{ex:compposetEB}
Below is the Hasse diagram of the poset of layers in the case that $\Phi_2$
is a type B root system and $X=E$ is a complex elliptic curve. 
Denote the two-torsion points
of $E$ by $z_1=e$ (the identity), $z_2$, $z_3$, and $z_4$.
The bijection with the poset in Example \ref{ex:partposetEB} should be visible.
\begin{center}
\begin{tikzpicture}[scale=1.3]
\node (0) at (0,0) {$E^2$};
\node (4) at (1,1) {$H_{12}$};
\node (5) at (3,1)  {$H'_{12}$};
\node (3) at (-1,1) {$H_2^e$};
\node  (2) at (-3,1)  {$H_1^e$};
\node (B) at (.5,2) {$(z_2,z_2)$}; 
\node (A) at (-1,2) {$(e,e)$}; 
\node (C) at (2,2) {$(z_3,z_3)$}; 
\node (D) at (3.5,2) {$(z_4,z_4)$}; 
\draw[-] (0)--(2)--(A)--(3)--(0)--(4)--(A)--(5);
\draw[-] (C)--(4)--(C)--(5)--(0);
\draw[-] (B)--(4)--(B)--(5)--(0);
\draw[-] (D)--(4)--(D)--(5)--(0);
\end{tikzpicture}
\end{center}
\end{example}

We finally describe how the poset of labelled partitions $\CC(X,\Phi_n)$
corresponds to the poset of layers of the arrangement $\A(X,\Phi_n^+)$.
Let $X$ be one of $\C$, $\C^\times$, or a complex elliptic curve, let 
$\Phi_n$ be a root system of type B, C, or D, and let $W_n$
be the hyperoctahedral group.

Given a partition $\Sigma\in\CC(X,\Phi_n)$, we will define a layer $F_\Sigma$ of
the arrangement $\A(X,\Phi_n^+)$ as follows. For $S\in\Sigma$, take the
collection of subvarieties $H_{ij}$ if either $i,j\in S$ or $\bar{i},\bar{j}\in
S$; $H'_{ij}$ if either $i,\bar{j}\in S$ or $\bar{i},j\in S$; and $H_i^z$ if
$S=\Sigma_z$ and $i\in S$. Denote the intersection of these subvarieties by
$F_S$, so that we define:
\[ F_\Sigma = \bigcap_{S\in\Sigma} F_S \]
Our claim, in Theorem 1, is that $F_\Sigma$ is indeed a layer of the
arrangement and that this gives a bijection between the two posets.
Since $F_\Sigma$ makes an appearance later in the paper, we will also give a more
explicit description of it now. 

Write
$\Sigma=\{S_1,\bar{S_1},\dots,S_\ell,\bar{S_\ell},\Sigma_{z_1},\dots,\Sigma_{z_m}\}$
where $\{z_1,\dots,z_m\}\subseteq X[2]$.
For $S\in \Sigma$, let $X_S$ denote the factor of $X^n$ corresponding to indices
$i$ with either $i\in S$ or $\bar{i}\in S$, so that we can write:
\[X^n = X_{S_1}\times\cdots\times X_{S_\ell}\times X_{\Sigma_{z_1}}\times\cdots\times
X_{\Sigma_{z_m}} \]
We have inclusions $\iota_k:X\into X_{S_k}$ such that the $i$-th coordinate of
$\iota_k(x)$ is $x$ when $i\in S_k$ and $x^{-1}$ when $\bar{i}\in S_k$.
We also have the inclusion $\iota_{z_j}:\{z_j\}\into X_{\Sigma_{z_j}}$ whose
image is the point $(z_j,\dots,z_j)$.
For a single $S_k$, we have an inclusion $X\times X^{n-|S_k|}\to X_{S_k}\times
X^{n-|S_k|}$ given by $\iota_k\times\id$ whose image is $F_{S_k}$.
Similarly, for $S=\Sigma_{z_j}$, $\iota_{z_j}\times\id$ gives an inclusion
$\{z_j\}\times X^{n-|S|}\to X_{\Sigma_{z_j}}\times X^{n-|S|}$ whose image is
$F_{\Sigma_{z_j}}$.

Together,
$\iota_1\times\cdots\times\iota_\ell\times\iota_{z_1}\times\cdots\times\iota_{z_m}$
gives an inclusion 
\[X\times\cdots\times X\times\{z_1\}\times\cdots\times\{z_m\} \to 
 X_{S_1}\times\cdots\times X_{S_\ell}\times X_{\Sigma_{z_1}}\times\cdots\times
X_{\Sigma_{z_m}}\] with image $F_\Sigma$. 
In particular, observe that $F_\Sigma$ is connected and has codimension
$n-\ell$, where $\ell$ is the number of pairs of unlabelled parts in $\Sigma$.

We demonstrate one more example before proving that this assignment
$\Sigma\mapsto F_\Sigma$ does indeed give a bijection.

\begin{example}\label{ex:tripleint}
Consider a triple intersection of hyperplanes
$H_{ij}$, $H'_{ij}$ and $H_{ik}$ ($i,j,k$ distinct) in $X^n$ (for $n\geq 3$).
We have $$H_{ij}\cap H'_{ij}\cap H_{ik} = \{(x_1,\dots,x_n)\in X^n \ |\ x_i=x_j,
x_i=x_j^{-1}, x_i=x_k\}$$
So for a point $x$ in the intersection, the $i$-th, $j$-th, and $k$-th
coordinates must all be equal and be in $X[2]$. So we can write:
$$H_{ij}\cap H'_{ij}\cap H_{ik} = \bigcup_{z\in X[2]} \{(x_1,\dots,x_n)\in X^n\ |\
x_i=x_j=x_k=z\}.$$
If $\{i,j,k\}=\{1,2,3\}$, the connected components will correspond to the 
labelled partitions  $$\Sigma =
\{\{1,\bar{1},2,\bar{2},3,\bar{3}\}_z,\{4\},\{\bar{4}\},\dots,\{n\},\{\bar{n}\}\}$$
with $z\in X[2]$.
Note in this example, setting $\{i,j,k\}=\{1,2,3\}$ and $z\in X[2]$, we have 
$F_\Sigma$ as a connected component of $H_{ij}\cap H'_{ij}\cap H_{ik}$, but this can also be written as the
intersection of all of these subvarieties: 
$H_{12}, H_{13}, H_{23}, H'_{12}, H'_{13}, H'_{23}, H_1^z, H_2^z, H_3^z$.
\end{example}

\begin{theorem}\label{thm:components}
Let $\Phi_n$ be a root system of type B, C, or D, and let $X$ be one of $\C$,
$\C^\times$, or a complex elliptic curve. 
Denote the corresponding arrangement by $\An=\A(X,\Phi_n^+)$ and the
corresponding set of labelled partitions by $\Cn=\CC(X,\Phi_n)$.
Then there is a $W_n$-equivariant isomorphism of ranked posets between $\Cn$ and the layers of $\An$. 
\end{theorem}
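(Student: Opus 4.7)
The plan is to show the explicit map $\Sigma\mapsto F_\Sigma$ (described just before the theorem) is the claimed isomorphism. From the product inclusion $X^\ell\times\{z_1\}\times\cdots\times\{z_m\}\into X^n$, the variety $F_\Sigma$ is connected of codimension $n-\ell$, where $\ell$ is the number of pairs of unlabelled blocks in $\Sigma$, matching $\rk(\Sigma)$. The defining conditions of $\CC(X,\Phi_n)$ ensure that each $H_i^z$ appearing in the description of $F_\Sigma$ actually lies in $\An$, so $F_\Sigma$ is contained in an intersection of hyperplanes from $\An$; a count shows this intersection's connected components are indexed precisely by the allowable labellings of its bar-invariant blocks, and $F_\Sigma$ is one such component, hence a layer.

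To invert, given a layer $F$, declare $i\sim j$ when $F\subseteq H_{ij}$ and $i\sim\bar j$ when $F\subseteq H'_{ij}$, and take the equivalence closure to get a partition $\Sigma(F)$ of $\n$. A block $S$ is bar-invariant iff the corresponding coordinates are forced to lie in $X[2]$ on $F$; since $F$ is connected, those coordinates take a common value $z\in X[2]$, which labels $S$. Conditions (i) and (ii) of $\PP_n(X)$ hold by construction, and membership in $\CC(X,\Phi_n)$ is a type-by-type check: in type A, no $H'_{ij}\in\An$, ruling out bar-invariant blocks entirely; in type D, no $H_i^z\in\An$ forces a bar-invariant block to arise through some $H_{ij}\cap H'_{ij}$ (cf.\ Example \ref{ex:tripleint}), hence to have size at least 4; in type B, the same mechanism allows any label for bar-invariant blocks of size $\geq 4$, while a bar-invariant block of size 2 requires $F\subseteq H_i^z$ directly, forcing $z=e$ since only $H_i^e$ is in $\An$. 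Checking $F_{\Sigma(F)}=F$ and $\Sigma(F_\Sigma)=\Sigma$ completes the bijection.

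Order-preservation is immediate: if $\Sigma<\Sigma'$ in $\CC(X,\Phi_n)$, the hyperplane relations defining $F_\Sigma$ form a subset of those defining $F_{\Sigma'}$, giving $F_{\Sigma'}\subseteq F_\Sigma$, which matches the reverse-inclusion order on layers. Rank-preservation is the codimension calculation above. For $W_n$-equivariance, the explicit action on hyperplanes (Section \ref{s:arrangements}) and on labelled partitions (Section \ref{s:components}) correspond block-by-block under $\Sigma\mapsto F_\Sigma$, yielding $w\cdot F_\Sigma=F_{w\cdot\Sigma}$. The main obstacle is the combinatorial bookkeeping in the type-by-type argument: a labelled block can be realized by several different subsets of hyperplanes, and the content of the proof is that the permitted labels in $\CC(X,\Phi_n)$ are exactly those realizable within $\An$, so that the two posets agree not merely at the level of lattices of intersections but at the level of connected components.
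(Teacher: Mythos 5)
Your overall plan coincides with the paper's: send $\Sigma$ to $F_\Sigma$, verify this lands in layers, invert via an equivalence relation on $\n$, and do a type-by-type check of the conditions defining $\Cn$. Two steps as written, however, would not go through.

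The claim that ``the defining conditions of $\CC(X,\Phi_n)$ ensure that each $H_i^z$ appearing in the description of $F_\Sigma$ actually lies in $\An$'' is false in types B and D. In type B the arrangement contains only $H_i^e$, and in type D it contains no $H_i^z$ at all, yet $\Cn$ permits labelled parts $\Sigma_z$ with $|\Sigma_z|\geq 4$ for every $z\in X[2]$, so the definition of $F_{\Sigma_z}$ involves subvarieties $H_i^z$ that are not in $\An$. To see that $F_\Sigma$ is a layer one must instead argue, as the paper does, that for $|\Sigma_z|\geq 4$ the subvariety $F_{\Sigma_z}$ is a connected component of $\bigcap_{i,j\in\Sigma_z}(H_{ij}\cap H'_{ij})$, whose hyperplanes do belong to $\An$ in all three types; the constraint on $\Cn$ only rules out isolated labelled pairs of size two whose label cannot be realized by the arrangement.

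The equivalence relation you use to invert --- $i\sim j$ when $F\subseteq H_{ij}$, $i\sim\bar{j}$ when $F\subseteq H'_{ij}$, closed up --- omits the relation $i\sim\bar{i}$ when $F\subseteq H_i^z$. A layer such as $H_1^e$ in type B (or $H_1^z$ in type C), which is contained in no $H_{1j}$ or $H'_{1j}$, would then decompose into singletons $\{1\},\{\bar{1}\}$ rather than the bar-invariant labelled block $\{1,\bar{1}\}_z$, and your subsequent assertion that a block is bar-invariant iff its coordinates are forced into $X[2]$ would be false of the partition you just constructed. Your remark that a size-two bar-invariant block ``requires $F\subseteq H_i^z$ directly'' shows you know this case matters, but that condition must be built into the relation itself, as in the paper. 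With these two corrections the argument is essentially the paper's.
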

\begin{proof}
We will first describe the bijection and prove the theorem for type C, and then we restrict to our other types.

\textit{The bijection in type C:}

Let $\Sigma\in\PP_n(X)$, our poset of type C, and consider $F_\Sigma$ as
described above. It is an intersection of subvarieties in $\An$ and it is
connected, thus it is a layer of the arrangement.
Moreover, if $\Sigma$ and $\Sigma'$ are distinct elements of $\PP_n(X)$, then
$F_\Sigma$ and $F_{\Sigma'}$ are distinct subvarieties in $X^n$. To see this,
suppose that we have $F_\Sigma=F_{\Sigma'}$. 
Suppose we had $\Sigma_z\neq \Sigma'_{z}$ for some $z\in X[2]$. If, say, $i\in
\Sigma_z\setminus \Sigma'_{z}$, then every point of $F_\Sigma$ would have the
$i$-th coordinate equal to $z$ while this is not the case in $F_{\Sigma'}$.
So we must have $\Sigma_z=\Sigma'_z$ for each $z$. 
If the unlabelled parts differed, then we would have some $i,j$ such that either $i$
and $j$, or $i$ and $\bar{j}$, are in the same unlabelled part in $\Sigma$ but
not in $\Sigma'$ (or vice versa).
This means that for every $x\in F_\Sigma$ we have $x_i=x_j$, or $x_i=x_j^{-1}$
respectively, while this is not true of every point in $F_{\Sigma'}$. 

Now let $F$ be a layer of $\An$, and we will define a labelled partition
$\Sigma$ in $\PP_n(X)$ such that $F=F_\Sigma$. 
From $F$, we can define an equivalence relation on $\n$ as follows:
$i\sim j$ iff $\bar{i}\sim \bar{j}$ iff $H_{ij}\supseteq F$, 
$i\sim\bar{j}$ iff $\bar{i}\sim j$ iff $H'_{ij}\supseteq F$, and
$i\sim \bar{i}$ iff $H_i^z\supseteq F$ for some $z$. 
This gives (by taking equivalence classes) a partition of $\n$ where some parts
satisfy $S=\bar{S}$ and the others come in pairs $(S,\bar{S})$. We will label
the bar-invariant parts so that we get an element of $\PP_n(X)$. 
If $S=\bar{S}$, then for each $i\in S$, there exists a $z\in X[2]$ with $H_i^z\supseteq F$. Moreover, this $z$ is the same no matter which $i\in S$ we consider: if $i$ and $j$ are both in $S$, then $i\sim j$ and $i\sim \bar{j}$, which means $F$ is contained in a connected component of $H_{ij}\cap H'_{ij}$ (the one corresponding to our $z$). Thus, we may label $S$ by $z$.
Moreover, if $S$ and $T$ are distinct bar-invariant parts, they correspond to different elements of $X[2]$; otherwise, we'd have for $i,\bar{i}\in S$ and $j,\bar{j}\in T$ such that $H_{ij}\supseteq F$ and hence $i\sim j$. 

\textit{Compatibility with order and rank:} (Type C)

Assume that $\Sigma$ is a refinement of $\Sigma'$ such that $\Sigma_z\subseteq\Sigma'_z$ for all $z\in X[2]$. 
For all $S\in\Sigma$, there exists $T\in \Sigma'$ such that $S\subseteq T$, and all $T\in \Sigma'$ have such an $S$. Moreover, if $S=\Sigma_z$, then $T=\Sigma'_z$. Since $S\subseteq T$ implies $F_S\supseteq F_T$, we have
\[F_\Sigma = \bigcap_{S\in\Sigma} F_S\supseteq \bigcap_{T\in\Sigma'} F_T =
F_{\Sigma'}.\]

As for rank, recall from our construction that the codimension of $F_\Sigma$ is
equal to $n-\ell$, when $\Sigma$ has $2\ell$ unlabelled parts. This is equal to
the rank of $\Sigma$. 

\textit{Compatibility with Weyl group action:} (Type C)

Let $w=(\sigma,\epsilon)\in W_n$ and $F$ a layer of $\An$. 
If $H_{ij}\supseteq F$, then $$wF \subseteq wH_{ij} = \begin{cases}
H_{\sigma(i)\sigma(j)} & \text{ if } \epsilon_i\epsilon_j=1\\
H'_{\sigma(i)\sigma(j)} & \text{ if } \epsilon_i\epsilon_j=-1\\
\end{cases}$$
Similarly, if $H'_{ij}\supseteq F$, then $$wF\subseteq wH'_{ij} = \begin{cases}
H'_{\sigma(i)\sigma(j)} & \text{ if } \epsilon_i\epsilon_j=1\\
H_{\sigma(i)\sigma(j)} & \text{ if } \epsilon_i\epsilon_j=-1\\
\end{cases}$$
Finally, if  $H_i^z\supseteq F$, then $H_{\sigma(i)}^z\supseteq wF$. 
The first two pieces imply that if $S\in\Sigma$ is unlabelled then $wS$ is an unlabelled part of $w\Sigma$, and the last one implies that  $(w\Sigma)_z = w(\Sigma_z)$.

\textit{Type B$_n$:}

We now have $\Cn$ the set of $\Sigma\in\PP_n(X)$ such that if $|\Sigma_z|=2$
then $z=e$, and $\An$ now denotes the type B$_n$ arrangement in $X^n$.
Given $\Sigma\in\Cn$, we may construct $F_\Sigma$ as above, but we need to show
that $F_\Sigma$ is a layer of $\An$.
The type B$_n$ arrangement is a subarrangement of type C$_n$, where we exclude
$H_i^z$ for $z\neq e$. It is clear that if $S\in\Sigma$ is unlabelled, then $F_S$ is a layer; we need only worry about $F_{\Sigma_z}$. 
If $|\Sigma_z|=2$, then $z=e$, and we have $F_{\Sigma_z} = H_i^e$. 
If $|\Sigma_z|\neq2$, then consider the intersection $H_{\Sigma_z}$ of the subvarieties $H_{ij}$ and $H'_{ij}$ for $i,j\in \Sigma_z$. This intersection is not connected, but its connected components are indexed by $X[2]$, and $F_{\Sigma_z}$ is the connected component indexed by $z$.

We also need to show that in the inverse map, if we are restricting ourselves to
layers of the type B$_n$ arrangement, the partition we get will not have
$|\Sigma_z|=2$ for $z\neq e$. 
Suppose that $|\Sigma_z| = 2$; then there exists $i$ such that $i\sim \bar{i}$ but no $j$ with $i\sim j$ or $i\sim \bar{j}$. This implies that $H_i^z\supseteq F$ but no $H_{ij}$ or $H'_{ij}$ contains $F$. The only way this can be a layer in type B is if $z=e$.

\textit{Type D$_n$:}

Now let $\Cn$ be the set of $\Sigma\in\PP_n(X)$ such that $|\Sigma_z|\neq2$ for any $z\in X[2]$. 
Given such $\Sigma$, we may again construct $F_\Sigma$ as above, but we need to show that this is a layer of the type D$_n$ arrangement. 
As in type B, we need only worry about $F_{\Sigma_z}$ being a layer. But since $|\Sigma_z|$ is never 2, we will have $F_{\Sigma_z}$ as a connected component of the intersection $H_{\Sigma_z}  = \bigcap_{i,j\in\Sigma_z}(H_{ij}\cap H'_{ij})$.

On the other hand, suppose that we have a layer $F$ of the type D arrangement
and construct the corresponding partition $\Sigma\in\PP_n(X)$. If $\Sigma_z=\{i,\bar{i}\}$, then there is no $j$ such that $H_{ij}$ or $H'_{ij}$ contains $F$, contradicting the fact that $F$ is a layer.
\end{proof}

\begin{remark}
The analogous statement for type A$_{n-1}$ is clear, because the poset $\Cn$ in this case is equivalent to the partition lattice of the set $[n]$. 
\end{remark}

\begin{remark}
In the linear case, our description is equivalent to that given by Barcelo and Ihrig \cite[Theorems 3.1\&4.1]{barceloihrig}.
They showed that the poset in question is also isomorphic to the lattice of parabolic subgroups of the Weyl group.
It is also worth noting that in the type B/C linear case, this is the Dowling lattice. But in other cases, this labelling helps us take into account the more complicated structure of having multiple connected components of intersections. 
\end{remark}

\section{Representation stability}\label{s:stability}

Our goal is to show representation stability for the cohomology of our arrangements, but we first briefly review representation stability and its main tool of \fiw-modules. 
Throughout this section, we let $\W_n$ denote either the symmetric group $S_n$ (type A) or the hyperoctahedral group $W_n$ (type B/C). 
For more details on the theory, we refer the reader to \cite{cef,churchfarb} for the case of $S_n$ (and much more) and \cite{wilson2,wilson1} for the case of $W_n$ (as well as the type D Weyl group).

Note that we are working over characteristic zero throughout this paper. 
A representation of a group $G$ will always mean a group homomorphism 
$G\to\operatorname{GL}(V)$ where $V$ is a finite-dimensional vector
space over $\Q$. 
Unless otherwise stated, cohomology will always be with rational coefficients,
and we will write $H^*(X)$ to mean $H^*(X;\Q)$ for a space $X$. 

\subsection{$\W_n$-representation stability}

To discuss representation stability for a sequence of groups, one needs a consistent way of describing the irreducible representations. 
There are many cases in which this can be done, including the classical families of Weyl groups. 

For the symmetric group $S_n$, irreducible representations are indexed by partitions of $n$. If we consider a partition $\lambda=(\lambda_1,\dots,\lambda_\ell)$ of $k$ with $\lambda_1\geq\cdots\geq\lambda_\ell>0$ and $n\geq \lambda_1+k$, we may write $V(\lambda)_n$ to denote the irreducible representation of $S_n$ indexed by the partition $\lambda[n]:=(n-k,\lambda_1,\dots,\lambda_\ell)$.
For example, in this notation, $V(0)_n$ is always the trivial representation and $V(1)_n$ is always the standard representation. 

For the hyperoctahedral group $W_n$, irreducible representations are indexed by pairs of partitions $\lambda=(\lambda^+,\lambda^-)$ where $|\lambda^+|+|\lambda^-|=n$. Given a pair of partitions $\lambda=(\lambda^+,\lambda^-)$, where $\lambda^-$ is a partition of $k$, and $n$ large enough, we may write $V(\lambda)_n$ to be the irreducible representation of $W_n$ corresponding to $(\lambda^+[n-k],\lambda^-)$. 
For example, $V(0,0)_n$ is always the trivial representation. 

We start with a \textit{consistent} sequence $\{V_n\}$ of $\W_n$-representations; that is, each $V_n$ is a $\W_n$-representation along with $\W_n$-equivariant maps $\phi_n:V_n\to V_{n+1}$. 
Such a sequence is said to be \textit{uniformly representation stable} with stable range $n\geq N$ if for $n\geq N$\dots 
\begin{enumerate}
\item the map $\phi_n$ is injective,
\item the image $\phi_n(V_n)$ generates $V_{n+1}$ as a $\Q[\W_{n+1}]$-module, and 
\item $V_n=\displaystyle\bigoplus_\lambda c_{\lambda}V(\lambda)_n$, where the multiplicities $c_\lambda$ do not depend on $n$.
\end{enumerate}

\subsection{\fiw-modules}

Consider the category \fiw\ (where $\W$ denotes either type A or type B/C) consisting of objects $\n$ (with $\textbf{0}=\emptyset$) and morphisms $f:\textbf{m}\to\n$ which are injections such that $f(\bar{k})=\overline{f(k)}$ for all $k\in\textbf{m}$, also requiring that $f([n])\subseteq [n]$ if $\W$ is type A. 
An \textit{\fiw-module} is a functor $V$ from the category \fiw\ to the category of $\Q$-modules. We denote by $V_n$ the image of $\n$. Since $\End(\n)=\W_n$ in the category \fiw, the $\Q$-module $V_n$ comes equipped with an action of $\W_n$. Moreover, the natural inclusions $\n\into \textbf{n+1}$ induce $\W_n$-equivariant maps $V_n\to V_{n+1}$, making the sequence $\{V_n\}$ a consistent sequence of $\W_n$-representations. 

A map of \fiw-modules is a natural transformation. We say $U$ is a \textit{sub-\fiw-module} of $V$ if there is a map $U\to V$ such that $U_n$ is a subrepresentation of $V_n$ for all $n$. 
An \fiw-module $V$ is \textit{finitely generated} if there is a finite set of elements of $\sqcup V_n$ that are not contained in any proper sub-\fiw-module.
An \fiw-module $V$ has \textit{stability degree} $\leq s$ if $(V_{n+a})_{\W_n}\cong (V_{n+1+a})_{\W_{n+1}}$ for every $a\geq0$ and $n\geq s$, where the subscript denotes the coinvariants. 
We say that $V$ has \textit{weight} $\leq d$ if for all $n$, every irreducible representation $V(\lambda)_n$ appearing with nonzero multiplicity in $V_n$ satisfies $|\lambda|\leq d$ (if $\lambda$ is a partition) or $|\lambda^+|+|\lambda^-|\leq d$ (if $\lambda=(\lambda^+,\lambda^-)$ is a pair of partitions). 
Again, we refer the reader to \cite{wilson2,wilson1} for more details on these concepts; we state here the main properties and example on which our results rely. 

We start by stating a proposition on finitely generated FI$_\W$-modules, bounding
the weight and stability degree for kernels, cokernels, and extensions. 
The statements on stability degree were made in \cite[Lemma~6.3.2.]{cef} for
type A. Wilson extended this to FI$_\W$ to establish part (1) in
\cite[Prop.~4.18]{wilson2}, and the same argument gives part (2).
The statement on weight in part (1) follows from Definition 4.1 in
\cite{wilson2}, and part (2) for weights follows from semisimplicity of the
representations (since we are in characteristic zero).

\begin{proposition}\label{prop:fiwprops}
\ 
\begin{enumerate}
\item Assume that $f:U\to V$ is a map of \fiw-modules which are finitely generated with weight $\leq d$ and stability degree $\leq s$. Then $\ker(f)$ and $\coker(f)$ are both finitely generated with weight $\leq d$ and stability degree $\leq s$. 
\item Assume that $0\to U\to V\to Q\to 0$ is a short exact sequence of \fiw-modules, where $U$ and $Q$ are both finitely generated with weight $\leq d$ and stability degree $\leq s$. 
Then so is $V$. 
\end{enumerate}
\end{proposition}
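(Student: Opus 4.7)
The plan is to separate the proof into two components: the weight bounds, which are essentially formal consequences of semisimplicity, and the finite generation together with stability degree bounds, which will largely be deduced from the cited results of Church-Ellenberg-Farb and Wilson.

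First I would dispose of the weight assertions. Working in characteristic zero, every $\W_n$-representation is semisimple, so for part (1) one observes that $\ker(f)_n$ is a subrepresentation of $U_n$ and $\coker(f)_n$ is a quotient of $V_n$; the irreducibles appearing in either then have weight at most $d$ by hypothesis. For part (2), semisimplicity at each level $n$ (where $|\W_n|$ is invertible in $\Q$) yields a $\W_n$-equivariant splitting $V_n\cong U_n\oplus Q_n$, so the irreducibles appearing in $V_n$ are precisely those appearing in $U_n$ or $Q_n$. Only the levelwise representation-theoretic structure is used here, not the FI$_\W$-structure of the maps.

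Next I would address finite generation and stability degree. For the cokernel in part (1), the images under $f$ of a finite generating set of $V$ generate $\coker(f)$, and the surjection $V\twoheadrightarrow\coker(f)$ combined with exactness of coinvariants on the right gives the stability degree bound. For part (2), a finite generating set of $U$ together with lifts in $V$ of a finite generating set of $Q$ yields a finite generating set of $V$, and the stability degree bound follows by applying the five lemma to the coinvariant sequence $0\to (U_{n+a})_{\W_n}\to(V_{n+a})_{\W_n}\to(Q_{n+a})_{\W_n}\to 0$ (exact again by semisimplicity) and comparing with the $n+1$ version. For the corresponding bounds on $\ker(f)$ in part (1), I would invoke \cite[Prop.~4.18]{wilson2}, whose proof adapts \cite[Lemma~6.3.2]{cef} from FI to FI$_\W$.

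The main obstacle is the finite generation of $\ker(f)$, since submodules of finitely generated FI$_\W$-modules need not be finitely generated in general. This is precisely where the bounded weight and bounded stability degree hypotheses are essential: together they constrain $U$ sharply enough that subobjects inherit the same quantitative control. Rather than reprove this, my plan is simply to cite the extension of the CEF argument carried out by Wilson and to verify that the hypotheses of \cite[Prop.~4.18]{wilson2} transfer under the same bounds on $d$ and $s$.
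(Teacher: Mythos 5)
Your proposal is correct and follows essentially the same route as the paper. The paper's proof is largely a matter of citation: it invokes \cite[Lemma~6.3.2]{cef} (type A) and its FI$_\W$ extension \cite[Prop.~4.18]{wilson2} for the stability degree bounds, refers to Definition 4.1 of \cite{wilson2} for the weight claim in part (1), and uses semisimplicity for weights in part (2). Your reconstruction supplies the same semisimplicity argument for weights, makes the easy finite-generation and stability-degree arguments explicit for cokernels and extensions, and (correctly) identifies the kernel's finite generation as the genuinely nontrivial point requiring the Noetherian-type input from Wilson's Proposition 4.18. No gap.
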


Now, since finitely generated FI$_\W$-modules form an abelian category,
a spectral sequence of finitely generated FI$_\W$-modules converges to a
finitely generated FI$_\W$-module. 
See, for example, \cite[Cor. 2.5]{kupersmiller} or \cite[Thm.
3.3]{jimenezrolland} in the type A case.
However, it takes a little work to get a bound on the stability degree of the abutment. See,
for example, \cite[Thm. 6.3.1]{cef} for an argument on the bounds for weight and
stability degree in the case of configuration spaces. One could make the same
kind of argument for the spaces which we work with, but we state a formulation
which could be applied more generally. We state it for sequences which have
$E_3=E_\infty$, but it could (with a bit more book-keeping) be stated for
sequences with $E_r=E_\infty$. The idea is that Proposition
\ref{prop:fiwprops} tells us how to use the weight and stability degree on one
page to bound the weight and stability degree on the next, continuing until the
sequence collapses. We benefit from the fact that all of our sequences collapse
early.

\begin{proposition}\label{prop:ssfiwmods}
Suppose that $E_*^{pq}$ is a first quadrant spectral sequence of \fiw--modules
which converges to the \fiw-module $H^{p+q}$, and assume that $E_3^{pq}=E_\infty^{pq}$. 
If $E_2^{pq}$ is finitely generated with weight $\leq d_{p+2q}$ and stability
degree $\leq s_{p+2q}$, then $H^i$ is finitely generated with weight $\leq
\max\limits_{0\leq p\leq i}\{d_{2i-p}\}$ and stability degree $\leq 
\max\limits_{0\leq p\leq i}\{s_{2i-p}\}$. 
\end{proposition}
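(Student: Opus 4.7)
The plan is a two-step reduction: first propagate the per-bidegree bounds from $E_2$ to $E_\infty = E_3$, then unravel the abutment filtration on $H^i$.

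The arithmetic observation I would open with is that the differential $d_2\colon E_2^{pq}\to E_2^{p+2,q-1}$ preserves the weighted index $p+2q$, since $(p+2)+2(q-1)=p+2q$, and similarly the incoming differential from $E_2^{p-2,q+1}$ (with index $(p-2)+2(q+1)=p+2q$) lives at the same index. So I would realize $E_3^{pq}$ as a subquotient of $E_2^{pq}$ in two moves: first take $\ker\bigl(d_2\colon E_2^{pq}\to E_2^{p+2,q-1}\bigr)$, then take the cokernel of the map $d_2\colon E_2^{p-2,q+1}\to\ker(d_2)$. Applying Proposition \ref{prop:fiwprops}(1) twice, with source and target each bounded by $d_{p+2q}$ and $s_{p+2q}$, yields that $E_3^{pq}$ is finitely generated with weight $\leq d_{p+2q}$ and stability degree $\leq s_{p+2q}$. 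The hypothesis $E_3=E_\infty$ then transports these bounds to $E_\infty^{pq}$.

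For the second step, I would use the standard abutment filtration $0=F^{i+1}H^i\subseteq F^iH^i\subseteq\cdots\subseteq F^0H^i=H^i$, whose successive quotients are $F^pH^i/F^{p+1}H^i\cong E_\infty^{p,i-p}$. Each such quotient has weight $\leq d_{2i-p}$ and stability degree $\leq s_{2i-p}$, since $p+2(i-p)=2i-p$. Running downward induction on $p$ through the short exact sequences $0\to F^{p+1}H^i\to F^pH^i\to E_\infty^{p,i-p}\to 0$ and invoking Proposition \ref{prop:fiwprops}(2) at each stage then yields the claimed bounds $\max_{0\leq p\leq i}d_{2i-p}$ and $\max_{0\leq p\leq i}s_{2i-p}$ for $H^i=F^0H^i$.

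The only subtlety, rather than a genuine obstacle, is the bookkeeping: one must check that the index $p+2q$ is truly preserved by $d_2$ on both incoming and outgoing sides so that passage to $E_3$ does not inflate the bounds. The collapse assumption $E_3=E_\infty$ is what makes this clean, since higher differentials $d_r$ for $r\geq 3$ would strictly decrease $p+2q$ and force one to take maxima across several indices at each page; by contrast $d_2$ sits exactly at the boundary where the indexing is invariant, which is precisely why the statement weights by $p+2q$ rather than $p+q$.
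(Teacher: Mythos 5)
Your proposal is correct and follows essentially the same route as the paper's proof: propagate the bounds from $E_2$ to $E_3=E_\infty$ via Proposition \ref{prop:fiwprops}(1) applied to the $d_2$-sequence at fixed index $p+2q$, then run an induction through the abutment filtration using Proposition \ref{prop:fiwprops}(2). The only cosmetic differences are that you split the passage to $E_3$ into an explicit kernel-then-cokernel pair of applications of part (1), and you index the filtration decreasingly rather than increasingly; neither changes the substance.
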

\begin{proof}
Since each of the FI$_\W$-modules in a sequence $E_2^{p-2,q+1}\to E_2^{p,q}\to
E_2^{p+2,q-1}$ has weight $\leq d_{p+2q}$ and stability degree $\leq s_{p+2q}$,
part (1) of Proposition \ref{prop:fiwprops} implies that
$E_\infty^{p,q}=E_3^{p,q}$ has weight $\leq d_{p+2q}$ and stability degree $\leq
s_{p+2q}$.

Fix $i$, and consider the filtration $F_0\subseteq\cdots\subseteq F_i = H^i$,
where $F_j/F_{j-1} = E_\infty^{j,i-j}$. Since $F_0$ and $F_1/F_0$ are both
finitely generated, so is $F_1$ by part (2) of Proposition \ref{prop:fiwprops},
with weight $\leq \max\{d_{2i},d_{2i-1}\}$ and stability degree $\leq\max\{s_{2i},s_{2i-1}\}$. Repeating this for each $F_{j-1}$ and $F_j/F_{j-1}$ ($j=1,2,\dots,i$) gives that $H^i$ is finitely generated with the desired bounds on weight and stability degree. 
\end{proof}

Now having the bounds on weight and stability degree is what allows us to get a
stable range for our sequences, as the following theorem says, due to
Church-Ellenberg-Farb in type A and Wilson in type B/C.

\begin{theorem}\cite[Thm. 4.26]{wilson2},\cite[Thm. 2.58]{cef}\label{thm:fgstable}
If $V$ is an \fiw-module, with $\W$ of type A or B/C, which is finitely generated with weight $\leq d$ and stability degree $\leq s$, then the sequence $\{V_n\}$ of $\W_n$-representations with maps $V_n\to V_{n+1}$ induced by the natural inclusions \emph{$\n\to\textbf{n+1}$} is uniformly representation stable with stable range $n\geq d+s$. 
\end{theorem}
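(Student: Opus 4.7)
The plan is to verify the three conditions defining uniform representation stability --- injectivity of $\phi_n$, generation of $V_{n+1}$ by $\phi_n(V_n)$ as a $\Q[\W_{n+1}]$-module, and constancy of the multiplicities $c_\lambda$ in the irreducible decomposition --- for all $n \geq d+s$. The bound $d+s$ will emerge from the interplay of the weight hypothesis, which controls \emph{which} irreducibles can appear, and the stability-degree hypothesis, which controls \emph{when} certain coinvariants stabilize.

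I would address the multiplicity condition (3) first, since this is where both hypotheses enter essentially. The approach is a Frobenius-reciprocity identity: for $\lambda$ a partition (type A) or pair of partitions (type B/C) with $|\lambda| = k$, I want to identify the multiplicity $c_\lambda(n) = \dim \Hom_{\W_n}(V(\lambda)_n, V_n)$ with the dimension of a $\W_k$-isotypic component of the coinvariants $(V_n)_{\W_{n-k}}$, where $\W_{n-k} \hookrightarrow \W_n$ is the ``tail'' subgroup acting on the last $n-k$ coordinates. Once this identity is in place, the weight hypothesis restricts attention to $k \leq d$, and applying the stability-degree definition with $a = k$ and base index $n-k$ gives an isomorphism $(V_n)_{\W_{n-k}} \cong (V_{n+1})_{\W_{n+1-k}}$ whenever $n - k \geq s$, i.e., $n \geq k + s$. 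Taking the worst case $k = d$ yields the overall range $n \geq d+s$.

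Conditions (1) and (2) follow by companion arguments: the failure of $\phi_n$ to be injective, respectively the failure of $\phi_n(V_n)$ to generate $V_{n+1}$, is organized into an auxiliary sub-FI$_\W$-module, respectively a quotient FI$_\W$-module, of $V$. By Proposition \ref{prop:fiwprops} each of these inherits finite generation with weight $\leq d$ and stability degree $\leq s$, so the multiplicity analysis above applies to each, forcing them to be stably zero in degrees $n \geq d+s$. Equivalently, one can run the Frobenius-reciprocity argument directly with $\lambda$ trivial ($k = 0$) applied to these auxiliary modules and observe that their degree-$n$ piece has vanishing trivial-isotypic component together with vanishing weight-$\leq d$ multiplicities.

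The main obstacle is making the Frobenius identity precise so that the arithmetic yields exactly $d+s$ and not a weaker bound. In type A this relies on the interaction of Pieri's rule with the ``stable'' labelling $\lambda[n] = (n-k, \lambda_1, \dots, \lambda_\ell)$, ensuring that exactly one summand on the induced side corresponds to the desired $V(\lambda)_n$; in type B/C one must carry this through for pairs $(\lambda^+, \lambda^-)$ and verify that tail coinvariants detect the correct $\W_k$-isotypic component. An alternative, perhaps cleaner, route is via character polynomials: establish that $V$ admits a character polynomial of degree $\leq d$ valid on $\W_n$ for $n \geq d+s$, and then deduce multiplicity stability and conditions (1)--(2) from polynomial identities in the cycle-type (and sign) variables. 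Either way, the essential content is transporting the abstract definition of stability degree into explicit constancy of each multiplicity $c_\lambda(n)$, exactly at the threshold $n = d+s$.
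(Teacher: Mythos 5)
The paper does not prove this statement; it is quoted as a black box from Wilson \cite[Thm.~4.26]{wilson2} and Church--Ellenberg--Farb \cite[Thm.~2.58]{cef}, so there is no internal proof to compare against. Judged against those sources, your sketch does follow the standard line of argument: detect the multiplicity of $V(\lambda)_n$ through $\Hom_{\W_n}\bigl(\Ind_{\W_k\times\W_{n-k}}^{\W_n}V_\lambda\boxtimes\Q,\,V_n\bigr)\cong\Hom_{\W_k}(V_\lambda,(V_n)_{\W_{n-k}})$, use weight $\leq d$ to confine attention to $k\leq d$, and invoke stability degree $\leq s$ to freeze those coinvariants once $n-k\geq s$, whence $n\geq d+s$.

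Two points where your write-up is looser than the actual argument. First, $c_\lambda(n)$ is not literally the dimension of a $\W_k$-isotypic piece of $(V_n)_{\W_{n-k}}$; by Pieri (type A) or its hyperoctahedral analogue (type B/C), $\Ind_{\W_k\times\W_{n-k}}^{\W_n}V_\lambda\boxtimes\Q$ is a sum of several $V(\mu)_n$, so what you get is a unipotent-triangular system relating $\{c_\mu(n)\}$ to $\{\dim\Hom_{\W_k}(V_\lambda,(V_n)_{\W_{n-k}})\}$, and multiplicity stability is extracted by inverting that system --- you flag this as ``the main obstacle'' but it is worth naming it as the content, not just a detail to make precise. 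Second, the treatment of conditions (1) and (2) via auxiliary kernel/cokernel FI$_\W$-modules is under-justified: having weight $\leq d$ and stability degree $\leq s$ does not by itself make an FI$_\W$-module stably zero. What forces vanishing is that stability degree of $V$ makes the coinvariant maps $(V_{n+a})_{\W_n}\to(V_{n+1+a})_{\W_{n+1}}$ isomorphisms (hence injective and surjective) for $n\geq s$, and then the weight bound ensures these coinvariant isomorphisms, taken over all $k\leq d$, see every isotypic component of $V_n$ once $n\geq d+s$; that is the step that upgrades coinvariant control to honest injectivity/surjectivity of $\phi_n$. As a reconstruction of the cited proof, this is the right skeleton, but those two steps carry the real weight and should not be waved through.
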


In the following example, we describe particularly nice \fiw-modules, which will be useful for us in the next section. 

\begin{example} \label{ex:mw} (\cite[Ex. 1.5.5]{wilson1})
Let $U$ be a $\W_k$-representation which is finite dimensional. Consider the
\fiw-module $\M_\W(U)$ which takes $\n$ to $0$ if $n<k$ and otherwise to the $\W_n$-representation $\Ind_{\W_k\times \W_{n-k}}^{\W_n} U\boxtimes \Q$, where $U\boxtimes\Q$ is the external tensor product of $U$ with the trivial $\W_{n-k}$-representation $\Q$. 
$\M_\W(U)$ is a finitely generated \fiw-module with weight $\leq k$ and stability degree $\leq k$. 
Thus, 
the sequence of induced representations $$\left\{\Ind_{\W_k\times\W_{n-k}}^{\W_n} U\boxtimes\Q\right\}$$
is representation stable with stable range $n\geq 2k$. 
\end{example}

\subsection{Arrangements associated to root systems}
Let $\Phi_n$ be a root system of type B, C, or D, and let $X$ be one of $\C$,
$\C^\times$, or a complex elliptic curve. Then recall our notation of
$\An=\A(X,\Phi_n^+)$ for the corresponding arrangement, with complement $M(\An)$
in $X^n$. 
Again, by Remark \ref{rmk:typeD}, we will work with the action of the
hyperoctahedral group $W_n$ until the corollaries at the end of this section.

Consider the Leray spectral sequence of the inclusion $f:M(\An)\into X^n$, which is given by
$$E_2^{pq}(n) = H^p(X^n;R^qf_*\Q) \implies H^{p+q}(M(\An)).$$
Our goal is to show representation stability of the cohomology, and so we start by understanding the $E_2$-term as a representation. 

\begin{lemma}\label{lem:E2}
Let $\Phi_n$ be a root system of type B, C, or D, and let $X$ be one of $\C$, 
$\C^\times$, or a complex elliptic curve. Consider the corresponding arrangement 
$\A(X,\Phi_n^+)$ and Leray spectral sequence. Assume that $p,q\geq0$ and $n\geq p+2q$.

There are $W_k$--representations $V(\lambda,r,\alpha)$ indexed by some finite
set $I=\{(\lambda,r,\alpha)\}$, where $k\leq p+2q$ depends on
$(\lambda,r,\alpha)$, such that
$$E_2^{pq}(n) = \bigoplus_I \Ind_{W_k\times W_{n-k}}^{W_n} V(\lambda,r,\alpha)\boxtimes\Q.$$
\end{lemma}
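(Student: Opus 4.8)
The strategy is to compute the $E_2$-page $H^p(X^n;R^qf_*\Q)$ by analyzing the sheaf $R^qf_*\Q$ in terms of the stratification of $X^n$ by layers, and then to recognize the resulting $W_n$-representation as a direct sum of induced representations of the form appearing in Example \ref{ex:mw}. The key geometric input is that, by the Leray spectral sequence for an open inclusion and the local structure of the arrangement, the stalk of $R^qf_*\Q$ at a point of a layer $F$ is governed by the cohomology $H^q(M(\An_F))$ of the complement of the localization at $F$, which is a \emph{central linear} arrangement (as recalled in Section \ref{sec:arrbasics}). Thus $R^qf_*\Q$ is constructible with respect to the layer stratification, and its restriction to each layer $F$ is a local system with stalk $H^q(M(\An_F))$.

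\textbf{Key steps, in order.} First I would decompose $R^qf_*\Q = \bigoplus_F (\text{contribution supported on }F)$, or more precisely filter by closed unions of layers and use that over each open stratum the sheaf is a local system; since $X^n$ is either affine space, a torus, or an abelian variety, the cohomology $H^p(X^n; -)$ of such a constructible sheaf breaks up, via a spectral sequence or Künneth-type argument, into a sum over layers $F$ of terms built from $H^*(F)$, the local system on $F$ with stalk $H^q(M(\An_F))$, and normal data. Second, using the product description of $F_\Sigma$ from Section \ref{sec:components} — namely $F_\Sigma \cong X^\ell \times (\text{points})$ — together with Theorem \ref{thm:components} identifying layers with labelled partitions $\Sigma \in \Cn$, I would rewrite each summand indexed by a layer in the orbit of $\widehat\Sigma = \lambda$ in terms of data attached to $\lambda$. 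Third, I would collect all layers in a single $W_n$-orbit: by Lemma \ref{lem:orbits} the set of layers with a fixed $\widehat\Sigma = \lambda$ is one $W_n$-orbit, with stabilizer of the form $W_k \times W_{n-k}$ (where $k$ is determined by the "non-trivial part" of $\lambda$, i.e. $k \leq p+2q$ because a layer can only contribute to $E_2^{pq}$ if it has codimension $\leq q$ and its localization has cohomology in degree $q$, forcing $k$ small); summing the contributions over the orbit then assembles, by the standard identification $\bigoplus_{\text{orbit}} (\cdots) = \Ind_{W_k \times W_{n-k}}^{W_n}(\cdots)$, into the induced form $\Ind_{W_k\times W_{n-k}}^{W_n} V(\lambda,r,\alpha)\boxtimes \Q$. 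The indices $r$ and $\alpha$ record the cohomological degree contributions: $r$ from $H^*(F)$ along the "stable" torus/elliptic factors and $\alpha$ from the combination of the local system degree $q$ and the normal-bundle contribution, so that the bookkeeping tracks the total $(p,q)$.

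\textbf{Main obstacle.} The delicate point is controlling the local system on each layer $F$ — i.e. the monodromy action on the stalk $H^q(M(\An_F))$ and the way $H^p(X^n; -)$ distributes the degrees — and checking that after restricting to a $W_n$-orbit the normal/tangential data depends only on $\lambda$ and organizes into an honest $W_k$-representation $V(\lambda,r,\alpha)$ on which the complementary factor $W_{n-k}$ acts trivially (yielding the $\boxtimes\,\Q$). This is where the explicit product decomposition of $F_\Sigma$ from Section \ref{sec:components} does the real work: it exhibits $F_\Sigma$, its normal directions, and the localization $\An_{F_\Sigma}$ all as products indexed by the parts of $\Sigma$, with $W_{n-k}$ acting only on a "trivial tail" of unlabelled singleton pairs $\{i\},\{\bar i\}$ which contribute only the trivial factor $X$ with trivial coefficients. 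I would also need the degree bound: a layer contributing to $E_2^{pq}(n)$ has codimension at most $q$ (so at most $q$ merges/labels happen) hence involves at most $p+2q$ of the coordinates nontrivially, giving $k \leq p+2q$ and finiteness of the index set $I$ independent of $n$ once $n \geq p+2q$.
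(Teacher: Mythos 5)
Your high-level architecture matches the paper's proof: decompose $E_2^{pq}(n)$ as a sum over layers, group layers into $W_n$-orbits via Lemma~\ref{lem:orbits} and Theorem~\ref{thm:components}, exploit the explicit product decomposition of $F_\Sigma$, and identify each orbit's contribution as $\Ind_{W_k\times W_{n-k}}^{W_n}(-\boxtimes\Q)$ by computing stabilizers of a left-justified representative. However, you are proposing to \emph{re-derive} the starting decomposition of $E_2^{pq}(n)$ by a sheaf-theoretic analysis of $R^qf_*\Q$ over the layer stratification, which is where your ``main obstacle'' (monodromy of the local systems $H^q(M(\A_F))$) comes from. The paper instead simply quotes \cite[Lemma~3.1]{bibby}, which already gives the clean direct-sum decomposition
\[
E_2^{pq}(n) \cong \bigoplus_{\rk(\Sigma)=q} H^p(F_\Sigma)\otimes H^q(M(\A_{F_\Sigma})),
\]
with no local-system issues to worry about; your worry is a real cost of your route, not of the target statement. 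Separately, your bookkeeping of the indices $r$ and $\alpha$ is off. In the paper, $F_\Sigma$ factors as $F'_\Sigma\times X_{i_1}\times\cdots\times X_{i_s}$ where the $X_{i_j}$ come from the singleton pairs $\{i_j\},\{\bar{i_j}\}$ of $\Sigma$; by K\"unneth, $r$ records the degree on $H^*(F'_\Sigma)$ and $\alpha\vdash(p-r)$ records the nonzero degrees $a_j$ on the singleton factors $H^{a_j}(X_{i_j})$. Neither $r$ nor $\alpha$ involves ``the local system degree $q$'' or a ``normal-bundle contribution'' --- the entire $q$-degree is carried by the single tensor factor $H^q(M(\A_{F_\Sigma}))$, and the ``normal data'' of the layer is not split off separately. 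Your bound $k\leq p+2q$ is right, but for the concrete reason that $k = q+\ell(\lambda)+\ell(\alpha)$ with $\ell(\lambda)\leq q$ and $\ell(\alpha)\leq p-r\leq p$. If you flesh out your approach using the known decomposition in place of the sheaf-theoretic re-derivation and correct the $(r,\alpha)$ bookkeeping to the K\"unneth description above, you would essentially recover the paper's argument.
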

\begin{proof}
Fix the notation of $\Phi_n$, $X$, $\An=\A(X,\Phi_n)$, $\Cn=\CC(X,\Phi_n)$, and $\QQ_n=\QQ_n(X)$.
Also assume that $p,q\geq0$ and $n\geq p+2q$.
Recall that throughout, cohomology is assumed to have rational coefficients.

We start with a known decomposition \cite[Lemma 3.1]{bibby} as follows, which we write using our description of layers from Theorem \ref{thm:components}:
$$E_2^{pq}(n) \cong \bigoplus_{\Sigma} H^p(F_\Sigma) \otimes H^q(M(\A_{F_\Sigma}))$$
where the sum is taken over all $\Sigma\in\CC(X,\Phi_n)$ such that $\rk(\Sigma)=q$.
We recall that $F_\Sigma$ denotes the layer of $\An$ corresponding to the partition
$\Sigma$, as in Section \ref{sec:components}, and $\A_{F_\Sigma}$ denotes the
localization of the arrangement $\An$ at $F_\Sigma$, as discussed in Section
\ref{sec:arrbasics}.

The action of $W_n$ on $X^n$ induces the representation on 
$E_2^{pq}(n)=H^p(X;R^qf_*\Q)$, and tracing this action through the proof of 
 \cite[Lemma 3.1]{bibby}, one sees that it agrees with the action on the
decomposition which we will describe in a minute. 
First we recall the $W_n$-orbits in the indexing set $\Cn$. 
In Lemma \ref{lem:orbits}, we saw that for 
every $\lambda\in\QQ_n$, the set $\{\Sigma\in\Cn\ |\ \widehat{\Sigma}=\lambda\}$
is either empty or a $W_n$-orbit.
We will introduce a more convenient way to index these
orbits using labelled partitions of $q$, $\QQ_q$, so that it is independent of $n$. 

For $\lambda\in\QQ_q$, define $\lambda\langle n\rangle\in\QQ_n$ as follows: say
$(\lambda\langle n\rangle)_z = \lambda_z$ and if
$(\lambda_1,\dots,\lambda_\ell)$ are the unlabelled parts of $\lambda$ with
$\lambda_1\geq\cdots\geq\lambda_\ell>0$, let $(\lambda_1+1,\dots,\lambda_\ell+1,1,\dots,1)$ be the unlabelled parts of $\lambda\langle n\rangle$. 
In order for this to be a partition of $n$, note that we must add $n-q-\ell$
ones to the end of the partition and it will have $n-q$ unlabelled parts.
For example, if $\lambda$ is the labelled partition $(1_e,2,1)$ of 4, then $\lambda\langle 8\rangle$ is the partition $(1_e,3,2,1,1)$. 
Note that for every $\Sigma\in\Cn$ with rank $q$, there is some
$\lambda\in\QQ_q$ such that $\lambda\langle n\rangle=\widehat{\Sigma}$, so that
indexing by $\QQ_q$ will cover all of our $W_n$--orbits.

Now, the action of $W_n$ on $E_2^{pq}(n)$ permutes the summands of the decomposition according to its
action on $\Cn$. More explicitly, let $w\in W_n$. Then the action of $w$ on
$X^n$ induces maps 
$f_w:H^p(F_{\Sigma})\to H^p(F_{w^{-1}\Sigma})$ and
$g_w:H^q(M(\A_{F_{\Sigma}}))\to H^q(M(\A_{F_{w^{-1}\Sigma}}))$, so that for 
$x\otimes y$ in $H^p(F_{\Sigma})\otimes H^q(M(\A_{F_{\Sigma}}))$, we
have $w\cdot (x\otimes y) = f_{w}(x)\otimes g_{w}(y)$ in $H^p(F_{w^{-1}\Sigma})\otimes
H^q(M(\A_{F_{w^{-1}\Sigma}}))$. 
This means that if we rewrite the decomposition as
$$E_2^{pq}(n) = \bigoplus_{\lambda\in\QQ_q}\bigoplus_{\widehat{\Sigma}=\lambda\langle n \rangle} H^p(F_\Sigma)\otimes H^q(M(\A_{F_{\Sigma}})).$$
we have a decomposition into $W_n$--representations indexed by $\lambda\in\QQ_q$.
But these representations can and should be decomposed further.

Let $\Sigma\in\Cn$. 
Suppose that $\{i_1\},\{\bar{i_1}\},\dots,\{i_s\},\{\bar{i_s}\}$ are all of the
singleton parts in $\Sigma$. 
Referring to our coordinate-wise description of $F_\Sigma$ in Section
\ref{sec:components}, we see that it factors as 
$$F_\Sigma = F'_{\Sigma}\times X_{i_1}\times\cdots\times X_{i_s}$$
where each subscript $i_j$ denotes the coordinate in which the factor $X$
appears.
This means we can use the K\"unneth formula to write
$$H^p(F_\Sigma) = \bigoplus_{r+\sum a_i=p} H^r(F'_\Sigma)\otimes H^{a_1}(X_{i_1})\otimes\cdots\otimes H^{a_s}(X_{i_s}).$$
We denote $a=(a_1,\dots,a_s)$, and let $\widehat{a}\vdash(p-r)$ be the partition
which lists the nonzero elements of $a$ in decreasing order. For example, if
$a=(0,2,0,1,2)$, then $\widehat{a}=(2,2,1)$. 
Note that we may consider $a$ as an $n$-tuple where $a_1,\dots,a_s$ are the
coordinates corresponding to $i_1,\dots,i_s$ and we extend by 0.
So we have an action of $W_n$ via the action of $S_n$ by permuting coordinates.
The orbits of this action on $a$'s are indexed by $\alpha\vdash (p-r)$. 

Given $\lambda\in\QQ_q$, $r\in\{0,\dots,p\}$, and $\alpha\vdash(p-r)$, we will define the following:
$$E(\lambda,r,\alpha)_n = \bigoplus_{\widehat{\Sigma}=\lambda\langle n\rangle} \bigoplus_{\widehat{a}=\alpha} H^r(F'_\Sigma)\otimes H^{a_1}(X_{i_1})\otimes\cdots\otimes H^{a_s}(X_{i_s})\otimes H^q(M(\A_{F_\Sigma})).$$
Our claim is that each $E(\lambda,r,\alpha)_n$ is a $W_n$--representation, giving 
us the following decomposition and the indexing set in the
statement of the theorem:
$$E_2^{pq}(n) = \bigoplus_{\lambda\in\QQ_q} \bigoplus_{r=0}^p \bigoplus_{\alpha\vdash(p-r)} E(\lambda,r,\alpha)_n.$$
The action of $w\in W_n$ sends the summand indexed by $(\Sigma,r,a)$ to that
indexed by $(w^{-1}\Sigma,r,w^{-1}a)$. 
The orbit of the index $(\Sigma,r,a)$ is then indexed by $(\lambda,r,\alpha)$,
which gives the desired decomposition of $W_n$--representations, but moreover
means that $W_n$ acts transitively on the (nontrivial) summands of each
$E(\lambda,r,\alpha)_n$.

Now it remains to find the value $k$ (dependent on $(\lambda,r,\alpha)$ and
independent of $n$) and $W_k$--representation $V(\lambda,r,\alpha)$ for which
$E(\lambda,r,\alpha)_n$ is the desired induced representation.
Since $W_n$ acts transitively on the summands, for an arbitrary summand
$V(\Sigma,r,a)$ with stabilizer denoted by $G$, 
we have $E(\lambda,r,\alpha)_n = \Ind_G^{W_n} V(\Sigma,r,a)$.
We can pick a particularly nice choice of $\Sigma$ and $a$, by
``left-justifying'' in the same way Church does.
Take $a=(\alpha_1,\dots,\alpha_t,0,\dots,0)$ and $\Sigma$ to have singletons
$\{n-s+1\}$, $\{\overline{n-s+1}\}$, $\dots$, $\{n\}$, $\{\bar{n}\}$ along with
some fixed partition of \textbf{n-s} (independent of $n$ since
$n-s=q+\ell$). 

Let $\ell$ be the number of unlabelled parts of $\lambda$ and $\ell(\alpha)=t$,
and define $k=q+\ell+t$. This is independent of $n$, but note that $k=n-s+t$.
Consider $W_k$ as the subgroup of $W_n$ which acts on $\textbf{k}$, and consider $W_{n-k}$ as acting on $\n\setminus \textbf{k}$.
The stabilizer $G$ of our summand $V(\Sigma,r,\alpha)$ satisfies $W_{n-k}\subseteq G\subseteq W_k\times W_{n-k}$, and moreover, $W_{n-k}$ acts trivially on $V(\Sigma,r,a)$.
Thus, we can write $G=H\times W_{n-k}$ for some $H\subseteq W_k$ and view $V(\Sigma,r,a)$ as a representation over $H$. We define $V(\lambda,r,\alpha) = \Ind_H^{W_k} V(\Sigma,r,a)$.
Note that by our choice of $\Sigma$ and $a$, the $W_k$--representation $V(\lambda,r,\alpha)$ does not depend on $n$.

Finally,
\begin{align*}
E(\lambda,r,\alpha)_n &= \Ind_G^{W_n} V(\Sigma,r,a)\\
 &= \Ind_{H\times W_{n-k}}^{W_n}  V(\Sigma,r,a)\boxtimes\Q\\
 &= \Ind_{W_k\times W_{n-k}}^{W_n} \left( \Ind_{H\times W_{n-k}}^{W_k\times W_{n-k}} V(\Sigma,r,a)\boxtimes\Q \right) \\
 &= \Ind_{W_k\times W_{n-k}}^{W_n} \left( \Ind_H^{W_k} V(\Sigma,r,a) \right) \boxtimes\Q\\
 &= \Ind_{W_k\times W_{n-k}}^{W_n} V(\lambda,r,\alpha)\boxtimes\Q.
\end{align*}
\end{proof}

\begin{remark}
While the lemma required $n\geq p+2q$, one could still make sense of such a
decomposition for any $n$. The point of making a restriction on $n$ is so that
the partitions actually determine representations of $W_n$. But one could just
ignore the summands which don't make sense because $n$ is too small. 
\end{remark}

\begin{theorem}\label{thm:repstable}
Let $X$ be one of $\C$, $\C^\times$, or a complex elliptic curve, and let $W_n$
be the hyperoctahedral group. Let $\{\Phi_n\}$ be a sequence of root systems in
type B, C, or D, and let $\An=\A(X,\Phi_n^+)$ be the corresponding arrangements
in $X^n$. Then for each $i\geq0$, the sequence $\{H^i(M(\An))\}$ of
$W_n$-representations is uniformly representation stable with stable range
$n\geq 4i$. 
\end{theorem}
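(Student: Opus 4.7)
The plan is to apply the \fiw-module machinery already in place to the Leray spectral sequence of Lemma \ref{lem:E2} and read off the stable range. First I would verify that $n\mapsto M(\An)$ is naturally an \fiw-space. Given an \fiw\ morphism $f:\textbf{m}\hookrightarrow\textbf{n}$, the coordinate projection $\pi_f:X^n\to X^m$, $(x_1,\dots,x_n)\mapsto(x_{f(1)},\dots,x_{f(m)})$, restricts to a map $M(\An)\to M(\A_m)$: the preimage under $\pi_f$ of each defining subvariety $H_{ij}$, $H'_{ij}$, or $H_i^z$ of $\A_m$ is the analogous subvariety of $\An$ (with indices pushed forward by $f$), so $\pi_f$ sends complements to complements. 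Pulling back in cohomology makes $\{H^i(M(\An))\}$ into an \fiw-module, and naturality of the Leray spectral sequence promotes the whole sequence $E_r^{pq}(n)\Rightarrow H^{p+q}(M(\An))$ to a spectral sequence of \fiw-modules.

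By Lemma \ref{lem:E2}, for $n\geq p+2q$ the module $E_2^{pq}(n)$ is a finite direct sum of induced representations $\Ind_{W_k\times W_{n-k}}^{W_n} V(\lambda,r,\alpha)\boxtimes\Q$ with $k\leq p+2q$; these are the $n$-th values of the \fiw-modules $\M_\W(V(\lambda,r,\alpha))$ of Example \ref{ex:mw}. By that example each such summand is finitely generated with weight $\leq k\leq p+2q$ and stability degree $\leq k\leq p+2q$, and Proposition \ref{prop:fiwprops}(2) transports these bounds through the finite direct sum. Hence $E_2^{pq}$ is a finitely generated \fiw-module with weight and stability degree both $\leq p+2q$.

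Next I would invoke the fact that the Leray spectral sequence of $M(\An)\hookrightarrow X^n$ satisfies $E_3=E_\infty$: trivially when $X=\C$, since $X^n$ is contractible, and in the toric and elliptic cases by the degeneration/formality results available in the literature on arrangement cohomology. Applying Proposition \ref{prop:ssfiwmods} with $d_m=s_m=m$ then shows that $H^i(M(\An))$ is finitely generated with weight and stability degree both bounded by $\max_{0\leq p\leq i}(2i-p)=2i$. Finally, Theorem \ref{thm:fgstable} yields uniform representation stability with stable range $n\geq 2i+2i=4i$.

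The main obstacle, I expect, is Step 3 --- specifically invoking $E_3=E_\infty$ in the non-unimodular toric and elliptic cases, which requires appealing to the correct degeneration statement for these arrangements. Everything else is a clean assembly of the \fiw-module framework of Section \ref{s:stability} on top of the indexing decomposition in Lemma \ref{lem:E2}.
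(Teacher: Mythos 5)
Your proposal follows essentially the same route as the paper: make the Leray spectral sequence into a spectral sequence of \fiw-modules via functoriality of coordinate projections, apply Lemma \ref{lem:E2} together with Example \ref{ex:mw} to bound weight and stability degree of $E_2^{pq}$ by $p+2q$, use $E_3=E_\infty$ plus Proposition \ref{prop:ssfiwmods} to transfer the bounds to the abutment, and close with Theorem \ref{thm:fgstable}. Two small points: the degeneration $E_3=E_\infty$ you worry about is supplied by the author's earlier work \cite{bibby} (Lemma 3.2 and Remark 3.5 there), so no new literature search is needed; and one must check that the pointwise isomorphism $E_2^{pq}(n)\cong\bigoplus_I\M_W(V(\lambda,r,\alpha))_n$ of Lemma \ref{lem:E2} is actually an isomorphism of \fiw-modules, i.e.\ that the transition maps $E_2^{pq}(n)\to E_2^{pq}(n+1)$ respect the decomposition --- this follows from the fact that each $V(\lambda,r,\alpha)$ is independent of $n$, which is the content of the commutative diagram the paper exhibits and is implicitly what lets you invoke Proposition \ref{prop:fiwprops}(2) for the direct sum.
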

\begin{proof}
Note that for any inclusion $\iota:\n\into \textbf{m}$ with
$\iota(\bar{k})=\overline{\iota(k)}$, we have induced maps $X^m\to X^n$ and
$M(\A_m)\to M(\An)$. By functoriality of the Leray spectral sequence, we have
maps from the Leray spectral sequence associated to $\An$ to that of $\A_m$.
This makes the Leray spectral sequence a spectral sequence of FI$_W$-modules.
We claim that our decomposition of $E_2^{pq}(n)$ in Lemma \ref{lem:E2} actually
gives us a decomposition of FI$_W$-modules
$$E_2^{pq} = \bigoplus_{I} \M_W(V(\lambda, r, \alpha)).$$
Since each summand on the right-hand side is finitely generated with weight
$\leq p+2q$ and stability degree $\leq p+2q$, the finite direct sum giving
$E_2^{pq}$ must be as well. 
In each of our cases, we have $E_3=E_\infty$ (see Lemma 3.2 and Remark 3.5 in
\cite{bibby}), so the theorem then follows from Proposition
\ref{prop:ssfiwmods} and Theorem \ref{thm:fgstable}.

To see our decomposition as FI$_W$-modules, the fact that $V(\lambda, r,\alpha)$
does not depend on $n$ tells us that for each morphism (ie, injection)
$\iota:\n\to\textbf{n+1}$ in the category FI$_W$, the following diagram commutes:
\begin{center}
\begin{tikzpicture}
\node (0) at (-3,0) {$\bigoplus\Ind_{W_k\times W_{n-k}}^{W_n} V(\lambda,r,\alpha)\boxtimes\Q$};
\node (1) at (3,0) {$\bigoplus\Ind_{W_k\times W_{n+1-k}}^{W_{n+1}} V(\lambda,r,\alpha)\boxtimes\Q$};
\node (2) at (-2,2) {$E_2^{pq}(n)$};
\node (3) at (2,2) {$E_2^{pq}(n+1)$};
\draw[<->] (2) -- (-2,.5);
\draw[<->] (3) -- (2,.5);
\draw[->] (2) -- (3);
\draw[->] (0) -- (1);
\end{tikzpicture}
\end{center}
\end{proof}

\begin{remark}
The argument in Lemma \ref{lem:E2} is very similar to that given by Church \cite{church} in the type A case, generalized to work with $W_n$ and labelled partitions (rather than $S_n$ and partitions). 
In a separate paper, Church, Ellenberg, and Farb \cite[Theorem 6.2.1]{cef} provide an alternative proof of representation stability for the type A case, which uses the fact that the $E_2$-term is generated by the cohomology of the linear arrangement along with the cohomology of the ambient space. 
The fact that these generators are finitely generated FI-modules is enough to show that each piece of the $E_2$-term is. 
However, in the other cases, the lack of unimodularity makes it more complicated. We still have that the cohomology of the linear arrangement and the cohomology of the ambient space give finitely generated \fiw-modules \cite{wilson1}. However, these together are not enough to generate the $E_2$-term. 
Instead of dealing with these extra generators separately, we have decided to follow Church's original argument more closely. 
\end{remark}

There are a few easy consequences of the stability. 
First, if we consider $\W_n$ to be the type A or type D Weyl groups, both of
which are a subgroup of the type B/C Weyl group $W_n$, then the restriction of
an FI$_W$-module to an \fiw-module preserves finite generation
\cite[Prop.~3.22]{wilson2}. This gives us the following:
\begin{corollary}\label{cor:restrict}
Let $\{\Phi_n\}$ be a sequence of root systems in type B, C, or D.
Let $X$ be one of $\C$, $\C^\times$, or a complex elliptic curve, and let 
$\An=\A(X,\Phi_n^+)$ be the corresponding arrangement in $X^n$. 
Let $\W_n$ be the type A, B/C, or D Weyl groups. Then the sequence
$\{H^i(M(\An))\}$ of $\W_n$--representations is uniformly representation
stable.
\end{corollary}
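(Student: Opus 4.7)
The plan is to deduce this corollary from Theorem \ref{thm:repstable} together with the restriction result \cite[Prop.~3.22]{wilson2}. The essential observation is that the proof of Theorem \ref{thm:repstable} establishes something stronger than the statement asserts: it exhibits $\{H^i(M(\An))\}$ as a finitely generated FI$_W$-module with weight and stability degree bounded in terms of $i$. This comes from combining the FI$_W$-module structure on the Leray spectral sequence (each $E_2^{pq}$ is identified in Lemma \ref{lem:E2} with a finite direct sum of modules of the form $\M_W(V(\lambda,r,\alpha))$, which are finitely generated with weight and stability degree $\leq p+2q$) with Proposition \ref{prop:ssfiwmods} applied at the collapse $E_3 = E_\infty$.

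Next, I would note that the type A category and the appropriate type D category both embed naturally as subcategories of FI$_W$: a bar-invariant injection $\n\hookrightarrow \textbf{m}$ that additionally sends $[n]$ into $[m]$ corresponds to a morphism in type A, while the analogous parity condition on the sign part picks out the type D subcategory. Restriction along these subcategory inclusions converts an FI$_W$-module into an \fiw-module for $\W$ of type A or type D. By \cite[Prop.~3.22]{wilson2}, this restriction functor preserves finite generation together with the weight bound.

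Combining these two steps, the restriction of $\{H^i(M(\An))\}$ to a sequence of $\W_n$-representations (with $\W_n$ of type A, B/C, or D) is a finitely generated \fiw-module with bounded weight. Theorem \ref{thm:fgstable} then delivers uniform representation stability. Note that the type B/C case is just Theorem \ref{thm:repstable} itself; the genuinely new content is the type A and type D statements.

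There is not really a substantive obstacle beyond Theorem \ref{thm:repstable}: the corollary is a formal consequence of recognizing the FI$_W$-module structure and invoking Wilson's restriction result. The only minor points of care are verifying that the construction in the proof of Lemma \ref{lem:E2} really produces a map of \emph{FI$_W$-modules} (not just a sequence of $W_n$-representations) so that restriction makes sense, and confirming that after restriction the relevant stability degree is still finite; both follow directly from \cite[Prop.~3.22]{wilson2} without new input.
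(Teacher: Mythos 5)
Your proposal is correct and follows exactly the paper's own argument: Theorem \ref{thm:repstable} exhibits $\{H^i(M(\An))\}$ as a finitely generated FI$_W$-module, and restriction along the type A or type D subcategory of FI$_W$ preserves finite generation by \cite[Prop.~3.22]{wilson2}, which yields uniform representation stability. Your additional care about verifying the FI$_W$-module structure on the spectral sequence is a point the paper handles in the proof of Theorem \ref{thm:repstable}, so nothing new is needed here.
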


Because of this, we will state the other corollaries in this generality.
We also obtain an analogue of \cite[Cor. 5.10]{wilson1} on the polynomiality of characters. 

\begin{corollary}\label{cor:polychar}
Let $\{\Phi_n\}$ be a sequence of root systems in type A, B, C, or D, and let
$\W_n$ be the corresponding Weyl groups. Let $X$ be one of $\C$, $\C^\times$, or
a complex elliptic curve, and let $\An=\A(X,\Phi_n^+)$ be the corresponding
arrangement in $X^n$. 
Then the sequence of characters of the $\W_n$-representations $H^i(M(\An))$ are 
given by a unique character polynomial of degree $\leq 2i$.
In particular, we have that $\dim H^i(M(\An))$ is a polynomial in $n$ of degree $\leq 2i$. 
\end{corollary}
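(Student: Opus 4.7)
The plan is to deduce the corollary directly from the FI$_W$-module structure established in the proof of Theorem~\ref{thm:repstable}, together with the general character polynomial theorem for finitely generated FI$_W$-modules.

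First, I would pin down a weight bound on $\{H^i(M(\An))\}$ as an FI$_W$-module. By Lemma~\ref{lem:E2} combined with Example~\ref{ex:mw}, each summand $\M_W(V(\lambda,r,\alpha))$ appearing in $E_2^{pq}$ has weight and stability degree $\leq k \leq p+2q$, so $E_2^{pq}$ itself is finitely generated with weight and stability degree $\leq p+2q$. Since $E_3 = E_\infty$ in all our cases (as cited in the proof of Theorem~\ref{thm:repstable}), Proposition~\ref{prop:ssfiwmods} applies with $d_{p+2q} = s_{p+2q} = p+2q$. Along the diagonal $p+q=i$ this gives the bound $p + 2(i-p) = 2i-p$, and maximizing over $0 \leq p \leq i$ (achieved at $p=0$) shows that $H^i(M(\An))$ is a finitely generated FI$_W$-module of weight $\leq 2i$.

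Next, I would invoke the FI$_W$ character polynomial theorem: every finitely generated FI$_W$-module of weight $\leq d$ has characters eventually given by a unique character polynomial of degree $\leq d$ in the signed cycle-counting variables. This is the analogue for FI$_W$ of \cite[Thm.~3.3.4]{cef}, and is the engine behind \cite[Cor.~5.10]{wilson1}. Applied with $d = 2i$, it immediately yields the type B/C case. For the type A and type D Weyl groups, the restriction of a finitely generated FI$_W$-module remains finitely generated of the same weight by \cite[Prop.~3.22]{wilson2}, and one invokes either the original FI character polynomial theorem \cite[Thm.~3.3.4]{cef} or Wilson's version for the type D Weyl group.

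Finally, the dimension statement is obtained by evaluating the character at the identity: specializing a character polynomial in the signed cycle-counting variables at the identity conjugacy class (setting the variable counting fixed points equal to $n$ and the others to zero) produces a polynomial in $n$ of degree at most the degree of the character polynomial, hence $\leq 2i$. The main step requiring care is the first one, namely correctly propagating the weight $\leq p+2q$ bound at the $E_2$ page through the spectral sequence to obtain weight $\leq 2i$ on the abutment; once that bookkeeping is done, the rest of the argument is a black-box application of existing FI$_W$-module theory.
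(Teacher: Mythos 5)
Your proposal is correct and follows essentially the same route the paper intends: the corollary is a black-box consequence of the weight bound $\leq 2i$ established in the proof of Theorem \ref{thm:repstable} (via Lemma \ref{lem:E2}, Example \ref{ex:mw}, and Proposition \ref{prop:ssfiwmods}) combined with the FI$_\W$ character polynomial theorem underlying \cite[Cor.~5.10]{wilson1} (and \cite[Thm.~3.3.4]{cef} in type A), with dimensions read off by evaluating at the identity.

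One small clarification worth making: your phrasing "For the type A and type D Weyl groups, the restriction of a finitely generated FI$_W$-module remains finitely generated..." is accurate for the type D Weyl group acting on the type B/C/D arrangements (restriction via \cite[Prop.~3.22]{wilson2}), but the type A \emph{arrangement} carries only an $S_n$-action to begin with, so there is no FI$_W$-module to restrict. In that case you run the identical spectral-sequence argument directly at the level of FI-modules (as in Church's and Church--Ellenberg--Farb's work), which you do implicitly allow for by citing \cite[Thm.~3.3.4]{cef}; it is just worth separating the two cases cleanly rather than folding the type A arrangement into the restriction step.
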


Finally, since $H^i(M(\An)/\W_n)\cong H^i(M(\An))^{\W_n}$ and Theorem \ref{thm:repstable} implies stability of the multiplicity of the trivial representation, we can make a statement on homological stability. 
Arnol'd \cite{arnold} established homological stability for the type A linear
arrangement, and Church \cite{church} gave a more general type A homological
stability result. The other linear cases have been studied by 
Brieskorn \cite{brieskorn}, but as far as the author knows, 
it has not been stated for the toric and elliptic analogues in types B, C, and D.

\begin{corollary}\label{cor:homstab}
Let $\{\Phi_n\}$ be a sequence of root systems in type A, B, C, or D, and let
$\W_n$ be the corresponding Weyl groups. Let $X$ be one of $\C$, $\C^\times$, or
a complex elliptic curve, and let $\An=\A(X,\Phi_n^+)$ be the corresponding
arrangement in $X^n$. Then the orbit spaces $M(\An)/\W_n$ enjoy rational
homological stability. That is, for each $i$, $H^i(M(\An)/\W_n)$ does not
depend on $n$ for $n\geq 4i$.
\end{corollary}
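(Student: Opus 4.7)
The plan is to reduce the statement to the stabilization of the multiplicity of the trivial representation in $H^i(M(\An))$, which is a direct consequence of Theorem \ref{thm:repstable} combined with Corollary \ref{cor:restrict}. The first step is to recall that, since we are working over $\Q$ and $\W_n$ is a finite group, the quotient map $M(\An)\to M(\An)/\W_n$ induces a natural isomorphism
\[ H^i(M(\An)/\W_n;\Q) \cong H^i(M(\An);\Q)^{\W_n}, \]
obtained by averaging over $\W_n$ (the transfer). Thus the corollary reduces to showing that $\dim_\Q H^i(M(\An))^{\W_n}$ is constant for $n\geq 4i$.

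Next, I would invoke representation stability. Corollary \ref{cor:restrict} tells us that $\{H^i(M(\An))\}$ is uniformly representation stable as a sequence of $\W_n$--representations, and (in the type B, C, D cases) Theorem \ref{thm:repstable} gives the explicit stable range $n\geq 4i$; the type A case follows from the same proof with $\W_n=S_n$, and the type D case is obtained by restricting the $W_n$--action. In all cases we obtain a decomposition
\[ H^i(M(\An)) = \bigoplus_\lambda c_\lambda(i)\, V(\lambda)_n \]
with multiplicities $c_\lambda(i)$ independent of $n$ for $n\geq 4i$.

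The final step is to isolate the invariants. The dimension of the $\W_n$--invariants is exactly the multiplicity of the trivial representation, which in our indexing is $V(0)_n$ (type A) or $V(0,0)_n$ (type B/C/D). Therefore
\[ \dim_\Q H^i(M(\An))^{\W_n} = c_0(i), \]
which is a number depending only on $i$ once $n\geq 4i$. Combining this with the first step yields the desired stability of $H^i(M(\An)/\W_n)$ for $n\geq 4i$.

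The only mild subtlety to address is that representation stability is phrased in terms of the $W_n$--action, while the corollary is stated for the Weyl group in each type. For type A this is automatic, and for type D one must note that restricting a uniformly representation stable FI$_W$--module to the type D Weyl subgroup preserves the stability of trivial--representation multiplicities; this is precisely what Corollary \ref{cor:restrict} provides via \cite[Prop.~3.22]{wilson2}, so no separate argument is needed. No step here is a genuine obstacle; the proof is essentially a direct corollary of the representation stability already established.
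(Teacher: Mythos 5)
Your proof is correct and takes essentially the same approach as the paper: identify $H^i(M(\An)/\W_n)$ with the $\W_n$-invariants via the transfer, then read off the stable multiplicity of the trivial representation from Theorem \ref{thm:repstable} (and Corollary \ref{cor:restrict} for type D).
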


\subsection{Examples and Computations}
Computations such as finding the stable multiplicities of irreducible representations and the character polynomials are difficult in general. 
In \cite[Thm.~1(1)]{chen}, Chen recently gave a generating function for the
stable multiplicities of the representations
$H^i(M(\A(X,\Phi_n^+)))$ in the case that $X=\C$ and $\Phi_n$ is type A.
But more general computations, even for the other linear or type A cases, are
not known. 

One aspect of the elliptic case that might make it harder is that not even the Betti numbers are known in general (there is a nice combinatorial description of the Betti numbers for linear and toric arrangements). 
But if one wanted to compute the stable multiplicities for the elliptic case, one might try to first compute them for the $E_2$-term. What you see, even in type A, is some tensor products of the linear case with an exterior algebra. Thus, even if the multiplicities of the linear case were known, one would have to deal with computation of the Kronecker coefficients from the tensor product. 

We do show some work for the degree one cohomology. Even in degree two, though,
 it starts to get more complicated. 

\begin{example}
Here are computations of the stable multiplicities of $H^1(M(\An))$ when
$\Phi_n$ is type A and $\An=\A(X,\Phi_n^+)$.
\begin{enumerate}
\item If $X=\C$, then $H^1(M(\An))=V(0)\oplus V(1)\oplus V(2)$ for $n\geq 4$. Church and Farb give this and a decomposition for degree two in \cite{churchfarb}.
\item If $X=\C^\times$, then $H^1(M(\An))=V(0)^{\oplus2}\oplus V(1)^{\oplus 2}\oplus V(2)$ for $n\geq 4$.
\item If $X$ is an elliptic curve, then we have
$E_\infty^{01}(n)=0$ and hence for $n\geq2$, 

$H^1(M(\An))=E_2^{10}(n)=V(0)^{\oplus2}\oplus V(1)^{\oplus2}$. 
\end{enumerate}
\end{example}

\begin{example}\label{ex:decomp}
In this example, we demonstrate the decomposition of Lemma \ref{lem:E2} for
first-degree cohomology $H^1(M(\A(\C^\times,\Phi^+_2)))$ in the case that
$\Phi_2$ is type B. 
In the Leray spectral sequence for toric arrangements, we have that
$E_2=E_\infty$, and so the decompositions of $E_2^{01}$ and $E_2^{10}$ together
give a decomposition of the cohomology. 
First, we consider our poset of labelled partitions, drawn so that orbits are
grouped together. 
\begin{center}
\begin{tikzpicture}
\node (0) at (0,0) {$\{\{1\},\{\bar{1}\},\{2\},\{\bar{2}\}\}$};
\node[above right= 1cm and -3mm of 0] (4)  {$\{\{1,2\},\{\bar{1},\bar{2}\}\}$};
\node[right= 3mm of 4] (5)   {$\{\{1,\bar{2}\},\{\bar{1},2\}\}$};
\node[left=3mm of 4] (3)  {$\{\{2,\bar{2}\}_1,\{ 1\},\{\bar{1}\}\}$};
\node[left=3mm of 3]  (2)  {$\{\{1,\bar{1}\}_1,\{ 2\},\{\bar{2}\}\}$};
\node[above=of 4] (C)  {$\{\{1,\bar{1},2,\bar{2}\}_{\text{-1}}\}$};
\node[above=of 3] (B)  {$\{\{1,\bar{1},2,\bar{2}\}_1\}$};
\draw[-] (0)--(2)--(B)--(3)--(0)--(4)--(B)--(5);
\draw[-] (C)--(4)--(C)--(5)--(0);
\draw (4.south west) rectangle (5.north east);
\draw (0.south west) rectangle (0.north east);
\draw (2.south west) rectangle (3.north east);
\draw (B.south west) rectangle (B.north east);
\draw (C.south west) rectangle (C.north east);
\end{tikzpicture}
\end{center}

Even though this example is about $n=2$, there is one note that we can make here
for general $n$. We have $E_2^{10}=H^1((\C^\times)^n)$, which is the first
degree part of an exterior algebra of $\Q^n$. The Weyl group acts in the
standard way on $\Q^n$, giving us $E_2^{10}(n) = V((n-1),(1))$. This tells us
in particular that 
$E_2^{10}(2)=V((1),(1))$, and stably we have 
$E_2^{10}(n)=V(\emptyset,(1))_n$. 

For $E_2^{01}$, we must have $r=0$ and $\alpha=0$, and so our decomposition is indexed by the two orbits in the middle of the above picture. These correspond to the two labelled partitions of 1: $(1_1)$ and $(1)$. 
In the first orbit, let $\Sigma = \{\{1,\bar{1}\}_1,\{2\},\{\bar{2}\}\}$, and in
the second orbit, let $\Sigma'=\{\{1,2\},\{\bar{1},\bar{2}\}\}$.
We have:
\begin{align*}
H^1(M(\A_2)) &= E_2^{10}(2) \oplus E_2^{01}(2)\\
&= V((1),(1))\oplus E((1_1),0,0)_2 \oplus E((1),0,0)_2\\
&= V((1),(1))\oplus \Ind_{W_1\times W_1}^{W_2} V(\Sigma,0,0) \oplus
\Ind_{D_2}^{W_2} V(\Sigma',0,0)\\
&= V((1),(1)) \oplus \Ind_{W_1\times W_1}^{W_2} H^1(M(\A_{F_\Sigma})) \oplus
\Ind_{D_2}^{W_2} H^1(M(\A_{F_{\Sigma'}}))\\
&= V((1),(1)) \oplus \Ind_{W_1\times W_1}^{W_2} H^1(\C^2\setminus H_1) \oplus
\Ind_{D_2}^{W_2} H^1(\C^2\setminus H_{12})\\
&= V((1),(1)) \oplus \Ind_{W_1\times W_1}^{W_2} V((1),\emptyset)\boxtimes\Q \oplus \Ind_{D_2}^{W_2}
V((2),\emptyset)
\end{align*}

Note that this does not give us the stable multiplicities, since we need $n\geq
4$. However, 
Wilson \cite{wilson1} gave a decomposition of $E_2^{01}$, which we can consider
as the first degree cohomology of the linear type B/C arrangement. This
decomposition, as an FI$_{W}$-module is $\M_{W}(1,\emptyset)\oplus
\M_{W}(2,\emptyset)\oplus \M_{W}(\emptyset,2)$. 
By decomposing this into irreducibles when $n=4$ and using
$E_2^{10}(n)=V(\emptyset,(1))_n$, one could compute the stable multiplicities.

If we had considered the type D$_2$ arrangement, we would have $$H^1(M(\A_2))
= V((1),(1))\oplus V((2),\emptyset)\oplus V(\emptyset,(2)).$$
The only difference from type B$_2$ is that we have only one orbit of rank one, indexed by the partition $(1)$. 

If we had considered the type C$_2$ arrangement, we would have three orbits: $(1_1)$, $(1_{-1})$, and $(1)$. The first and last would act as before; the new orbit would act just as $(1_1)$ did. 
Thus, we would have the same decomposition as in the type B$_2$ case with an
extra factor of $\M_{W}(1,\emptyset)$. 
\end{example} 

\begin{example}
In this example, we demonstrate an aspect of the polynomiality of characters as in Corollary \ref{cor:polychar}. 
For each of our arrangements $\An=\A(X,\Phi_n^+)$, we state the dimension of
$H^1(M(\An))$. These formulas hold for all $n\geq 2$. 

\begin{center}
\renewcommand{\arraystretch}{2}
\begin{tabular}{|c|c|c|c|}
\hline
 & $X=\C$ & $X=\C^\times$ & $X=E$ \\
\hline
Type A$_{n-1}$ & $\binom{n}{2}$ & $\binom{n}{2}+n$ & $2n$ \\
\hline
Type B$_n$ & $2\binom{n}{2}+n$ & $2\binom{n}{2}+2n$ & $\binom{n}{2}+2n$ \\
\hline
Type C$_n$ & $2\binom{n}{2}+n$ & $2\binom{n}{2}+3n$ & $\binom{n}{2}+5n$ \\
\hline
Type D$_n$ & $2\binom{n}{2}$ & $2\binom{n}{2}+n$ & $2n$ \\
\hline
\end{tabular}
\end{center}
\end{example}

\subsection{An improvement for type A}
In our main results, we had ignored type A for ease of working only with $W_n$ and because the result was already known, but the stable range for some type A arrangements can be improved. 
Recall that we know the sequence stabilizes once $n\geq 4i$, in each type (that is, for each $X$ and family of root systems). 
Recently, Hersh and Reiner \cite{hershreiner} improved the stable range for the type A linear case, showing that the $i$-th cohomology stabilizes for $n\geq 3i+1$. 
We show an improvement for the elliptic case, and we wonder if it can be improved further, or if Hersh and Reiner's result can be used to improve the range of the toric case. 
\begin{proposition}\label{prop:aelliptic}
If $\{\Phi_n\}$ is a sequence of type A root systems and $X$ is a complex
elliptic curve, let $\An=\A(X,\Phi_n^+)$. Then 
for each $i\geq1$, the stable range of the sequence $\{H^i(M(\An))\}$ of
$S_n$--representations may be improved to $n\geq 4i-2$. 
\end{proposition}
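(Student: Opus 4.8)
The plan is to retrace the proof of Theorem~\ref{thm:repstable}, specialized to type A and $X$ elliptic, and sharpen the book-keeping in Proposition~\ref{prop:ssfiwmods} by exploiting features peculiar to the elliptic case. The general argument bounded both the weight and the stability degree of $E_2^{pq}(n)$ by $p+2q$, giving weight and stability degree $\leq 2i$ for $H^i$, hence a stable range $n\geq d+s=4i$. To improve this to $4i-2$ I want to show that in the elliptic type A case one actually gets a bound of $2i-1$ on either the weight or the stability degree of $E_2^{pq}$ (or of the abutment directly), at least in the relevant range $0\le p\le i$, $p+q=i$, while keeping the other bound at $2i$.

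First I would recall the concrete description of the $E_2$-page. For type A the layers are indexed by ordinary partitions of $[n]$ (the partition lattice), and for $\Sigma$ of rank $q$ the summand is $H^p(F_\Sigma)\otimes H^q(M(\A_{F_\Sigma}))$, with $F_\Sigma\cong E^{n-q}$ a product of elliptic curves and $M(\A_{F_\Sigma})$ the complement of a linear braid-type arrangement. Following the left-justification in Lemma~\ref{lem:E2}, the $\mathrm{FI}_W$-generator $V(\lambda,r,\alpha)$ of the summand $E(\lambda,r,\alpha)$ lives in degree $k=q+\ell+t$, where $\ell$ is the number of (nonsingleton-accounted) parts of $\lambda\vdash q$ and $t=\ell(\alpha)$ with $\alpha\vdash(p-r)$. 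The key observation I would push is that $F_\Sigma=E^{n-q}$ contributes $H^\ast(E^{n-q})$, an exterior algebra on $2(n-q)$ classes, and the $S_n$-action on the degree-$a$ part of each $H^\ast(E_{i_j})$ is through a $2$-dimensional space; the generator degree $k=q+\ell+t\le q+q+(p-r)=p+2q-r$. When $p+q=i$ and we are on a diagonal contributing to $H^i$, the worst case $r=0$, $\ell=q$, $t=p$ gives $k=p+2q=2i-p\le 2i$, with equality only when $p=0$. So the genuinely troublesome term is $E_2^{0,i}$, i.e.\ $q=i$, $p=0$, which forces $r=0$, $\alpha=\emptyset$, $t=0$, and then $k=q+\ell=i+\ell$; the maximum $\ell=i$ occurs only for the all-singletons-removed partition, i.e.\ $\lambda$ the partition of $i$ into all parts — wait: $\ell\le q=i$ needs each part of size $1$, but then $F'_\Sigma$ is trivial and $M(\A_{F_\Sigma})$ is a point, forcing $q=0$, contradiction. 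Hence for $q=i>0$ one has $\ell\le i-1$, so $k\le 2i-1$ on that diagonal. The plan is to argue, diagonal by diagonal for $p+q=i$, that the weight of $E_2^{pq}$ is $\le \max(2i-p,\,2i-1)$, which is $\le 2i-1$ except at $p=i$, $q=0$, where $E_2^{i0}=H^i(E^n)$ has weight $\le i$ anyway.

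Then I would feed this into Proposition~\ref{prop:ssfiwmods}: with $E_2^{pq}$ having weight $\le d_{p+2q}$ where I can now take $d_m\le m-1$ for $m$ odd-and-$\ge 3$ and $d_m\le m$ otherwise, the bound on the weight of $H^i$ becomes $\max_{0\le p\le i} d_{2i-p}$, and a short case-check shows this is $\le 2i-1$ for $i\ge 1$ (the terms $2i-p$ with $p$ even give $\le 2i-1$ by the separate elliptic/combinatorial input above, and $p$ odd gives $2i-p$ odd, so $d_{2i-p}\le 2i-p-1\le 2i-2$). Keeping the stability-degree bound at $2i$ (it needs no improvement), Theorem~\ref{thm:fgstable} then yields the stable range $n\ge d+s\le (2i-1)+2i=4i-1$; to get all the way to $4i-2$ I would additionally squeeze the stability degree. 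The cleanest route: the summands $\M_W(V(\lambda,r,\alpha))$ have stability degree $\le k$, and the same analysis shows $k\le 2i-1$ on every diagonal except $p=i$ where $k\le i$, so Proposition~\ref{prop:ssfiwmods} gives $H^i$ stability degree $\le 2i-1$ as well, hence stable range $n\ge (2i-1)+(2i-1)=4i-2$. I expect the main obstacle to be the careful verification that the "exceptional" corner $p=0,q=i$ really does force $\ell\le i-1$ (equivalently, that a rank-$i$ layer in the braid arrangement cannot have all blocks trivial), together with checking that the $E_3=E_\infty$ collapse does not reintroduce a weight-$2i$ class when passing from $E_2$ to $E_\infty$ — i.e.\ that the differentials, which decrease the relevant $\mathrm{FI}_W$-weight bound only, behave as Proposition~\ref{prop:fiwprops} guarantees. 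The rest is the routine arithmetic of taking maxima over $0\le p\le i$.
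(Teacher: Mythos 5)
Your key combinatorial claim is false, and it's exactly at the step you flagged as ``the main obstacle.'' You assert that at the corner $p=0$, $q=i$, having $\ell=q$ unlabelled parts in $\lambda$ forces all blocks of $\Sigma$ to be singletons and hence $q=0$. But $\ell$ counts the parts of the partition $\lambda\vdash q$, not the blocks of $\Sigma$. Taking $\lambda=(1^q)$ gives $\ell=q$, and unwinding the $\lambda\langle n\rangle$ construction produces $\Sigma$ with $q$ blocks of size $2$ and $n-2q$ singletons, a perfectly good rank-$q$ layer. Its local arrangement is a product of $q$ copies of $A_1$, so $M(\A_{F_\Sigma})\simeq(\C^\times)^q$ and $H^q(M(\A_{F_\Sigma}))\cong\Q\neq 0$; the summand survives and the generator degree is $k=q+\ell+t=2q=2i$. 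So the weight and stability degree of $E_2^{0,i}$ genuinely hit $2i$, and no amount of case-checking over the other diagonals can recover the $4i-2$ range by weight/stability-degree book-keeping alone.

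The paper's proof takes a fundamentally different route: rather than trying to bound the weight of $E_2^{0,i}$, it shows the term is entirely killed by the time you reach $E_\infty$. Specifically, using Bezrukavnikov's monomial basis for the $E_2$-page of the elliptic configuration-space spectral sequence, one checks directly that the differential $d\colon E_2^{0,i}(n)\to E_2^{2,i-1}(n)$ is injective for all $n$, so $E_\infty^{0,i}=0$. With that corner gone, every surviving $E_\infty^{j,i-j}$ has $j\geq 1$ and hence weight and stability degree at most $2i-1$, giving the stable range $n\geq 4i-2$ via Theorem~\ref{thm:fgstable}. You did anticipate that ``the $E_3=E_\infty$ collapse'' should be examined, but you framed it as a worry that the collapse might \emph{hurt} you; the actual point is that the collapse is what \emph{saves} you. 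To salvage your proposal, replace the faulty $\ell\leq i-1$ argument with an injectivity argument for the differential out of the $E_2^{0,i}$ corner, using the explicit class of a diagonal $H_{ij}$ in $H^2(E^n)$ being $(x_i-x_j)(y_i-y_j)$ and the leading-term argument from the Bezrukavnikov basis.
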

\begin{proof}
Fix $i\geq 1$.
We claim that the differential $d:E_2^{0,i}(n)\to E_2^{2,i-1}(n)$ is injective
for all $n$, and hence $E_3^{0,i}=E_\infty^{0,i}=0$ for all $n$. 
Thus, in our filtration $F_0\subseteq\cdots\subseteq F_i=H^i$, the maximum weight (and similarly stability degree) among $F_j$ and $F_j/F_{j-1}$ is $2i-1$. This implies that $H^i(M(\An))$ is representation stable for $n\geq 2(2i-1)=4i-2$. 

First, we refer the reader to \cite{totaro} for a description of a decomposition
of and differential on the $E_2$ term, as well as \cite{bez} for a description
of a basis, in the case of type A arrangements.
Now to show injectivity of this differential, we pick our standard generators $x_i$, $y_i$ ($1\leq i\leq n$) for $H^*(X^n)$ and $g_{ij}$ ($1\leq i<j\leq n$) for $$E_2^{0,1}(n)=\displaystyle\bigoplus_{1\leq i<j\leq n} H^1(M(\A_{H_{ij}})).$$
The differential sends $g_{ij}$ to the class of the diagonal $H_{ij}$ in $H^2(X^n)$, which is given by $(x_i-x_j)(y_i-y_j)$. 
A basis for $E_2^{2,0}=H^2(X^n)$ is given by pairs $x_ix_j$, $y_iy_j$ ($i\neq j$) along with pairs $x_iy_j$. 
So if we had a linear combination $\sum c_{ij}g_{ij}$ in the kernel of the differential, then its image
$\sum c_{ij}(x_i-x_j)(y_i-y_j)$ would be zero. We can write this sum in terms of the basis as $\sum d_{i}x_iy_i - \sum c_{ij}(x_iy_j + x_jy_i)$ for some coefficients $d_i$.
Since $x_iy_j$ appears once, with coefficient $c_{ij}$, we must have $c_{ij}=0$. 

To extend this to $E_2^{0,q}\to E_2^{2,q-1}$, we recall the basis given by
Bezrukavnikov \cite{bez}. For
$E_2^{0,q}$, we have monomials $g_{i_1j_1}\dots g_{i_qj_q}$ with $i_s>j_s$
for each $s$ and with $i_1>i_2>\cdots >i_q$. Similarly, a basis for
$E_2^{2,q-1}$ is given by $z_az_bg_{i_1j_1}\dots g_{i_{q-1}j_{q-1}}$ where we
have the same conditions on the indices on $g$'s, $z_a$ stands for $x_a$ or
$y_a$, $z_b$ stands for $x_b$ or $y_b$, and we have
$a,b\notin\{i_1,\dots,i_{q-1}\}$.
Then suppose we have $d(\sum c_Sg_S)=0$ where $S$ runs through the sets indexing
our monomial basis and the $c_S$ are some rational coefficients. 
Consider one such $S=\{(i_1,j_1),\dots,(i_q,j_q)\}$. In the expansion of $d(\sum
c_Sg_S)$ in the monomial basis, the coefficient of
$x_{i_q}y_{j_q}g_{i_1j_1}\dots g_{i_{q-1}j_{q-1}}$ is equal to $\pm c_S$. This
is because we have $i_1>i_2>\cdots>i_{q-1}>i_q>j_q$; there is no other $S'$ from which
this monomial arises in the differential. Thus, we must have each $c_S=0$.
\end{proof}

\bigskip
\textbf{Acknowledgements. }
 The author wishes to thank Nick Proudfoot and Benson Farb for piquing her interest in this project, and Graham Denham for helpful conversations.
Many thanks to Jenny Wilson for pointing out and helping to fix an error in an earlier draft. 
Finally, the author is grateful to the referees for many useful comments and
suggestions.

\end{document}